\theoremstyle{plain}
\newtheorem{thm}{Theorem}[section]
\newtheorem{lem}[thm]{Lemma}
\newtheorem{prop}[thm]{Proposition}
\newtheorem{cor}[thm]{Corollary}
\newtheorem{ques}[thm]{Question}
\theoremstyle{definition}
\newtheorem{df}[thm]{Definition}
\newtheorem{exmp}[thm]{Example}
\newtheorem{rmk}[thm]{Remark}
\newcommand{\comment}[1]{}
\newcommand{\abs}[1]{\left|#1\right|}
\newcommand{\al}{\alpha}
\newcommand{\bbN}{\mathbb{N}}
\newcommand{\bbR}{\mathbb{R}}
\newcommand{\calC}{\mathcal{C}}
\newcommand{\calU}{\mathcal{U}}
\newcommand{\conv}{\textnormal{conv}}
\newcommand{\cont}{\gamma}
\newcommand{\D}{\Delta}
\newcommand{\emp}{\emptyset}
\newcommand{\geo}[1]{\|#1\|}
\renewcommand{\hat}[1]{\widehat{#1}}
\newcommand{\ind}[1]{\textnormal{ind}_{\mathbb{Z}_2}(#1)}
\renewcommand{\int}[1]{\left[#1\right]}
\newcommand{\m}[1]{\mathsf m_{#1}}
\newcommand{\Par}[1]{\left(#1\right)}
\newcommand{\pathlet}{\alpha}
\renewcommand{\phi}{\varphi}
\newcommand{\Rt}{\mathbb{R}^2}
\newcommand{\Rd}{\mathbb{R}^d}
\newcommand{\simp}[1]{\mathsf{#1}}
\newcommand{\scdp}[1]{\geo{{#1}^2_\D}}
\newcommand{\sd}{\textnormal{sd}}
\newcommand{\set}[1]{\left\{#1\right\} }
\newcommand{\su}{\subseteq}
\newcommand{\supp}{\textnormal{supp}}
\newcommand{\topdp}[1]{{#1}^2_\D}
\newcommand{\Zt}{\mathbb{Z}_2}
\numberwithin{equation}{section}
\newcounter{figure_num}
\begin{document}

\title{$d$-representability as an embedding problem}
\author{Moshe J. White \thanks{Research supported in part by the Israel Science Foundation (Grant No. 2669/21) and in part by the ERC Advanced Grant (Grant No. 834735)} \\
Institute of Mathematics,\\
Hebrew University, Jerusalem, Israel}
\date{}
\maketitle


\begin{abstract}
An abstract simplicial complex is said to be $d$-representable if it records the intersection pattern of a collection of convex sets in $\mathbb{R}^d$. In this paper, we show that $d$-representability of a simplicial complex is equivalent to the existence of a map with certain properties, from a closely related simplicial complex into $\mathbb{R}^d$. This equivalence suggests a framework for proving (and disproving) $d$-representability of simplicial complexes using topological methods such as applications of the Borsuk--Ulam theorem, which we begin to explore.
\end{abstract}

\section{Introduction} 
The \emph{nerve} of any finite collection of sets $C_1,\ldots,C_n$, is a simplicial complex which records all subcollections with non-empty intersection: the faces are precisely those $I\su\set{1,\ldots,n}$ for which $\bigcap_{i\in I}C_i\neq\emp$. 
A simplicial complex $\simp K$ is said to be \emph{$d$-representable} if it is the nerve of a collection of convex sets in $\Rd$.


The general problem of classifying $d$-representable complexes is wide open, but many partial results are known. Perhaps the earliest result on $d$-representability is Helly's classical theorem from 1923. There are additional topological restrictions: a $d$-representable complex cannot have non-trivial homology in any dimension $m\geq d$, and the same holds for any induced subcomplex. A complex with these homological properties is said to be \emph{$d$-Leray}. Another important property, combinatorial in nature, was introduced by Wegner \cite{We} in 1975. In an \emph{elementary $d$-collapsing} of a simplicial complex $\simp K$, we take a face $F$ of $\simp K$ which has dimension $<d$ and is contained in a unique maximal face of $\simp K$, and delete from $\simp K$ the face $F$ and every face containing it. If there is a sequence of elementary $d$-collapses from $\simp K$ to $\emp$, we say that $\simp K$ is \emph{$d$-collapsible}. Wegner proved that every $d$-representable complex is $d$-collapsible, and every $d$-collapsible complex is $d$-Leray. This result was extended by Eckhoff \cite{Eck}, who introduced \emph{strong $d$-collapsibility}, a property implied by $d$-representability and implying $d$-collapsibility.\\
For further reading on representability and related properties, the reader is referred to a survey by Tancer \cite{TaS}.

Therefore, if we discover that a certain complex $\simp K$ is, say, not $d$-collapsible, we may deduce that $\simp K$ is not $d$-representable. Another approach for disproving $d$-representability appeared in Wegner's PhD thesis from 1967, and was later extended by Tancer \cite{TaRep}. In the simplest case, they prove that a certain $\simp K$ (the barycentric subdivision of $K_5$, the complete graph on $5$ vertices) is not $2$-representable. This is done by showing that if $\simp K$ is the nerve of convex sets in $\Rt$, the vertices and edges of $K_5$ can be carefully mapped within the convex sets, in a manner implying that $K_5$ is a planar graph.


Our setting can be seen as a generalization of this result. For every $\simp K$ we define
the dual simplicial complex $\simp K'$ as the nerve of the facets of $\simp K$. Explicitly:
\begin{df} \label{dual_complex}
Let $\m{\simp K}$ denote the set of inclusion-maximal faces of a simplicial complex $\simp K$. The dual of $\simp K$ is defined as:
$$\simp K' = \set{\al\su \m{\simp K}\ :\ \bigcap\al\neq\emp}.$$
\end{df}
In Theorem \ref{faithful_lemma}, we prove that $\simp K$ is $d$-representable if and only if there exists a map from $\geo{\simp K'}$ (the geometric realization of $\simp K'$) to $\Rd$, satisfying certain properties. As in Tancer's proof, this is done by mapping the faces of $\simp K'$ within corresponding intersections of those convex sets in $\Rd$ which represent $\simp K$, whenever such sets exist. We also mention that a precursor of the idea to construct a map $\geo{\simp K'}\to\Rd$ by placing points among certain intersections of a collection of sets in $\Rd$, appeared in an earlier work of Matou\v sek \cite{MaH}.

This method can be used to obtain either representability or non-representability results, by either proving (as in Theorem \ref{codim_rep}) or disproving (as in Theorem \ref{non_rep}) the existence of the suitable map. These theorems may by applied regardless of the method by which we prove or disprove the existence of the required map $\geo{\simp K'}\to\Rd$. Proceeding with this goal, we will mostly focus on a topological method described in a book by Matou\v sek \cite{Ma}, on the application of the Borsuk--Ulam theorem. This leads to the introduction of a new class of simplicial complexes, which we name \emph{$d$-Matou\v sek} complexes:
\begin{df} \label{configuration_space}
Let $\simp K$ be a simplicial complex. We define $\hat{\simp K}$ as the topological space:
$$\hat{\simp K} = \set{(x,y)\in\geo{\simp K'}^2\ :\ \Par{\bigcap\ \supp(x)}\cup\Par{\bigcap\ \supp(y)}\notin \simp K}.$$
\end{df}
\begin{df} \label{d-Mat}
Let $\simp K$ be a simplicial complex, and let $d\in\bbN$. We say that $\simp K$ is $d$-Matou\v sek, if
there exists a continuous map $f:\hat{\simp K}\to S^{d-1}$ satisfying ${f(x,y)=-f(y,x)}$ for every ${(x,y)\in\hat{\simp K}}$.
\end{df}

Using the terminology of $\Zt$-spaces, the configuration space $\hat{\simp K}$ has an involution given by swapping coordinates, and $\simp K$ is $d$-Matou\v sek if the $\Zt$-index of $\hat{\simp K}$ is at most $d-1$. 

In broad terms, the configuration space $\hat{\simp K}$ consists of pairs of points which cannot be mapped to the same point in $\Rd$ without ``ruining'' the representation of $\simp K$. Therefore every $d$-representable is also $d$-Matou\v sek (Proposition \ref{basic_Mat}). We begin to explore the $d$-Matou\v sek property, and how it relates to $d$-representability.

It is natural to start by restricting the general problem to low dimension, and there are three ways to do this: by fixing the dimension of the complex $\simp K$, by fixing the dimension of the dual complex $\simp K'$, or by fixing $d$ (the dimension of the space in which we look for representing convex sets). For $d=1$ representability was fully classified in 1962 by Lekkerkerker and Boland \cite{LB}. We apply their results in the context of Matou\v sek complexes, and reclassify $1$-representable complexes as $1$-Matou\v sek complexes (Theorem \ref{Mat_d=1}). For simplicial complexes $\simp K$ such that $\dim\simp K'=1$, we again obtain that $d$-Matou\v sek and $d$-representability are identical (Theorem \ref{codim_1}).

However, we describe a complex $\simp K$ such that $\dim\simp K=1$, which is $2$-Matou\v sek but not $2$-representable (see Example \ref{bad_cover}).


The structure of the remainder of this paper more or less mimics the introduction: in Section 2 we briefly summarize existing notation, definitions, and results, for simplicial complexes and the $\Zt$-index. In Section 3 we introduce our new concepts, and the main theorems utilizing them. In Section 4 we explore $d$-Matou\v sek complexes, and we conclude in Section 5 by outlining some unanswered questions.



\subsection{Acknowledgments}
This work was completed as part of the PhD thesis of the author. I would like to thank my advisor Gil Kalai for his continued guidance and support, without which this work would not be possible. I would also like to thank Pavle Blagojevi\'c for useful discussions.

\section{Preliminaries} 

In this section we choose notations and give a brief and incomplete summary of previously existing results required for our setup.



Let $V$ be a finite set. A collection $\simp K\su 2^V$ is called a \emph{simplicial complex} if it is closed under downward inclusion: whenever $F\in\simp K$ and $G\su F$, we have $G\in\simp K$. The elements of $\simp K$ are called \emph{faces} and inclusion maximal faces are called \emph{facets}. A \emph{vertex} is any $i\in V$, and we assume that $\set{i}\in \simp K$ for every $i\in V$. We also denote the vertex set of $\simp K$ by $V(\simp K)$.\\
To ease notation, we mostly use numbers and lowercase Latin letters ($1,2,\ldots,i,j,n,x$ etc.) to denote single elements, uppercase Latin letters ($I,J,F,G,V$ etc.) for sets of elements such as faces, and Greek lowercase letters ($\alpha,\beta,\gamma$ etc.) for collections of sets of elements. Simplicial complexes will mostly be denoted by sans-serif capital letters ($\simp L,\simp K,\simp G$ etc.). We also often abbreviate notation for sets described by a list of elements, for example writing $123$ instead of $\set{1,2,3}$ when possible. For every $n\in\bbN$, let $[n]=\set{1,\ldots,n}$.\\
Simplicial complexes are \emph{isomorphic} if we can obtain one from the other by renaming the vertices (and extending to faces).\\
The \emph{dimension} of a face $F\in\simp K$ equals $|F|-1$. The dimension of a simplicial complex $\simp K$ is the maximal dimension among all faces of $\simp K$.\\
For every $d\in\bbN$, the \emph{$d$-dimensional simplex} is $\Delta_{d}=2^{[d+1]}$.\\
The \emph{$k$-skeleton} of a simplicial complex $\simp K$ is the subcomplex consisting of all faces of dimension $\leq k$, and is denoted by $\simp K^{(k)}$.\\
We consider graphs as $1$-dimensional simplicial complexes: we say that a simplicial complex $\simp G\su 2^V$ is a graph if all faces of $\simp G$ have dimension $\leq 1$. The edges $E=E(\simp G)$ are the $1$-dimensional faces of $\simp G$.\\

A simplicial complex $\simp K\su 2^V$ is \emph{$d$-representable}, if there exist sets $\set{C_i}_{i\in V}$, such that each $C_i\su\Rd$ is convex, and
$$\simp K=\set{I\su V\ :\ \bigcap_{i\in I} C_i\neq\emp}.$$
Let $\set{e_v}_{v\in V}$ denote the standard basis of $\bbR^V$. For every face $F\in\simp K$, the (geometric) simplex corresponding to $F$ is 
$$\geo F=\conv \set{e_v\ :\ v\in F}$$
where $\conv$ denotes the convex hull. The geometric realization of $\simp K$ is the topological space
$$\geo{\simp K} = \bigcup_{F\in\simp K} \geo F.$$
For a point $x\in\geo{\simp K}$, the support of $x$, denoted $\supp(x)$, is the minimal face $F\in\simp K$ such that $x\in\geo F$.

For any simplicial complex $\simp K$, the \emph{barycentric subdivision} of $\simp K$ is a new simplicial complex denoted by $\sd(\simp K)$. The vertices of $\sd(\simp K)$ are $\simp K\setminus\emp$ (non-empty faces of $\simp K$), and faces in $\sd(\simp K)$ are chains of faces in $\simp K$. In other words, a collection of faces $\set{F_1,\ldots,F_n}\su\simp K$ is a face in $\sd(\simp K)$, if $\emp\subsetneq F_1\subsetneq\cdots\subsetneq F_n$.\\
It is straightforward to verify that $\geo{\simp K}$ and $\geo{\sd(\simp K)}$ are canonically homeomorphic, where the homeomorphism $\geo{\sd(\simp K)}\to\geo{\simp K}$ is defined on vertices by
$$e_F\ \mapsto\ \frac1{|F|}\sum_{i\in F}e_i\ ,$$
and extended linearly to simplices. We also note the easily verifiable fact:

\begin{lem} \label{barycentric_lemma}
Let $\simp K$ be a simplicial complex, and let $x\in\geo{\simp K}$ be a point supported by some face $F\in\simp K$. Then the point in $\geo{\sd(\simp K)}$ corresponding to $x$ is supported by $\set{F_1,\ldots,F_n}$, where $\emp\subsetneq F_1\subsetneq\cdots\subsetneq F_n=F$ is a chain of faces in $\simp K$.
\end{lem}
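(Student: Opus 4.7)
The plan is to work from the sd side back to $\geo{\simp K}$ and track how the barycentric coordinates transform under the canonical homeomorphism. Let $y\in\geo{\sd(\simp K)}$ be the point corresponding to $x$, and let $\alpha=\set{F_1,\ldots,F_n}$ with $\emp\subsetneq F_1\subsetneq\cdots\subsetneq F_n$ be its support in $\sd(\simp K)$ (note that every face of $\sd(\simp K)$ is by definition such a chain, so this is automatic once we have the support). It suffices to show that $F_n=F$; then $\alpha$ is the desired chain.

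Write $y=\sum_{k=1}^n\lambda_k e_{F_k}$ with $\lambda_k>0$ for every $k$ and $\sum\lambda_k=1$, where the positivity is exactly the statement that $\alpha$ is the minimal chain supporting $y$. Applying the canonical homeomorphism, which sends $e_G\mapsto\frac1{|G|}\sum_{i\in G}e_i$ and is linear on each simplex, gives
$$x\ =\ \sum_{k=1}^n\lambda_k\cdot\frac1{|F_k|}\sum_{i\in F_k}e_i\ =\ \sum_{i\in V}\left(\sum_{k:\,i\in F_k}\frac{\lambda_k}{|F_k|}\right)e_i.$$
Since all the $\lambda_k/|F_k|$ are strictly positive, there is no cancellation: the coefficient of $e_i$ in $x$ is positive if and only if $i\in F_k$ for some $k$, which, because $F_1\subseteq\cdots\subseteq F_n$, holds if and only if $i\in F_n$. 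Thus $\supp(x)=F_n$, and by uniqueness of the minimal supporting face we conclude $F=F_n$, as desired.

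The only ``obstacle'' is making sure one cites positivity of the coefficients to rule out cancellation; otherwise the proof is a direct bookkeeping of how chains in $\sd(\simp K)$ map to convex combinations of vertices of $\simp K$, and the nesting of the chain forces the top element to control the support.
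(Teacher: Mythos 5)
Your proof is correct and complete: writing the point in barycentric coordinates over its supporting chain, pushing it through the linear canonical homeomorphism, and using strict positivity of the coefficients to conclude $\supp(x)=F_n$ is exactly the right verification, including the one point that needs care (no cancellation). The paper states this lemma without proof, calling it easily verifiable, and your bookkeeping argument is precisely the intended verification, so there is nothing to contrast.
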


We often work in the category of \emph{$\Zt$-spaces}: objects (called \emph{$\Zt$-spaces}) are topological spaces with a continuous $\Zt$-action, and morphisms (called \emph{equivariant maps}) are continuous and commute with the $\Zt$-action. The action of the non-trivial element in $\Zt$ on the $\Zt$-space is called the \emph{involution}.
\newline\\
Important $\Zt$-spaces that we use include:

\begin{itemize}
\item The $d$-dimensional sphere $S^d$ (where $d$ is a non-negative integer).
\item For any topological space $X$, the deleted product of $X$ is 
$$\topdp X = \set{(x,y)\in X^2 : x\neq y}.$$
\item For any simplicial complex $\simp K$, the deleted product of $\simp K$ is the topological space:
$$\scdp{\simp K} = \set{(x,y)\in \geo{\simp K}^2 : \supp(x)\cap\supp(y) = \emp}.$$
\end{itemize}
The involution on $S^d$ is the antipodal map ($x\mapsto -x$). For the deleted products (both of a topological space and of a simplicial complex), the involution is the coordinate swap, $(x,y)\mapsto(y,x)$. We will often omit repeatedly specifying the involution when it should be clear from the context.\\
With these definitions, the Borsuk--Ulam theorem from 1933 simply asserts:

\begin{thm}[Borsuk--Ulam]
Let $d\in\mathbb{N}$. There is no equivariant map $f:S^d\to S^{d-1}$.
\end{thm}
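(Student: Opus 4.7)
The plan is to proceed by induction on $d$, reducing the statement to the key fact that every $\Zt$-equivariant self-map of a sphere has odd degree; alternatively, the same fact can be obtained from a short computation in $\Zt$-equivariant (Borel) cohomology.

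The base case $d=1$ is immediate: a continuous map $f:S^1\to S^0=\set{\pm 1}$ from a connected space to a two-point discrete space must be constant, but equivariance demands $f(x)=-f(-x)$, which rules out constants. For the inductive step, suppose toward contradiction that an equivariant continuous $f:S^d\to S^{d-1}$ exists. Restricting $f$ to the equator $S^{d-1}\su S^d$ yields an equivariant map $g:S^{d-1}\to S^{d-1}$, while restricting $f$ to the closed upper hemisphere $D^d_+\cong D^d$ exhibits $g$ as the boundary restriction of a map defined on a disk; therefore $g$ is null-homotopic, and so $\deg(g)=0$. A contradiction follows once one shows that every equivariant continuous map $g:S^{d-1}\to S^{d-1}$ has odd degree.

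The main obstacle is this last odd-degree claim. I would prove it either inductively, by equivariantly smoothing $g$, choosing a regular value $v$ (so that $-v$ is also regular by equivariance), and carrying out a parity analysis of $|g^{-1}(v)|$ by pairing preimages via the antipodal action, or more cleanly via Borel cohomology, as follows. The free antipodal action on $S^d$ identifies $H^*_{\Zt}(S^d;\Zt)$ with $H^*(\mathbb{RP}^d;\Zt)=\Zt[w]/(w^{d+1})$. An equivariant $f:S^d\to S^{d-1}$ descends to $\bar f:\mathbb{RP}^d\to\mathbb{RP}^{d-1}$, and since $f$ is a morphism of principal $\Zt$-bundles, the pullback $\bar f^*$ sends the characteristic class $w\in H^1(\mathbb{RP}^{d-1};\Zt)$ of the target to the characteristic class $w\in H^1(\mathbb{RP}^d;\Zt)$ of the source. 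Ring functoriality then forces $\bar f^*(w^d)=w^d$ in $H^d(\mathbb{RP}^d;\Zt)$, but $w^d=0$ in $H^d(\mathbb{RP}^{d-1};\Zt)$ whereas $w^d\neq 0$ in $H^d(\mathbb{RP}^d;\Zt)$, the desired contradiction. This cohomological route sidesteps the smoothing and preimage-count arguments, and it is the version I would ultimately write up.
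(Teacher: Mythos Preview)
Your proposal is mathematically sound: both the inductive reduction to the odd-degree lemma and the direct $\Zt$-cohomology argument via $H^*(\mathbb{RP}^d;\Zt)=\Zt[w]/(w^{d+1})$ are standard and correct proofs of Borsuk--Ulam. One minor structural remark: the cohomological argument you give at the end already proves the theorem outright (no induction, no odd-degree lemma needed), so if that is the version you ``would ultimately write up,'' the preceding inductive scaffolding is unnecessary and could be dropped.

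However, there is nothing to compare against here. The paper does \emph{not} prove this theorem: it is quoted as the classical 1933 result of Borsuk and Ulam and used as a black box throughout (see the sentence preceding the statement and the subsequent Proposition~\ref{basic_index_properties}, which simply invokes it). So your proof is an addition rather than an alternative to anything in the paper.
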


To use the Borsuk--Ulam theorem in a setting other than spheres, we recall the definition of the $\Zt$-index:
\begin{df}[\cite{Ma}, Definition 5.3.1]
For any $\Zt$-space $A$, we define the $\Zt$-index of $A$ as $$\ind A = \min\{d\in\{0,1,2,\ldots\}\ |\ \exists \textnormal{ an equivariant } f: A\to S^d\}$$
\end{df}
This allows us to use spheres as a measure of the `size' of a $\Zt$-space, by the following immediate observations:

\begin{prop}[\cite{Ma}, Proposition 5.3.2] \label{basic_index_properties}
\ 
\begin{enumerate}
\item If $A,B$ are $\Zt$-spaces and there exists an equivariant $f:A\to B$, we \textit{must} have $\ind A\leq\ind B$ (any equivariant map $B\to S^d$ can be precomposed with $f$).
\item For every non-negative integer $d$, we have $\ind {S^d}=d$ (follows from Borsuk--Ulam).
\end{enumerate}
\end{prop}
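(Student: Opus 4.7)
The plan is to handle the two parts separately, and both are essentially unwindings of the definition of $\Zt$-index together with the Borsuk--Ulam theorem, which the excerpt already permits me to cite.

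For part (1), I would start by assuming $\ind B = d$, so that by definition there exists an equivariant map $g : B \to S^d$. Given the hypothesized equivariant $f : A \to B$, I would form the composition $g \circ f : A \to S^d$. The only thing to check is that this composition is itself equivariant and continuous; both follow immediately because composition of continuous equivariant maps is continuous and equivariant (if $\iota_A, \iota_B$ denote the involutions, then $g \circ f \circ \iota_A = g \circ \iota_B \circ f = \iota_{S^d} \circ g \circ f$). Hence $d$ is a value witnessing $\ind A \leq d$, which gives $\ind A \leq \ind B$.

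For part (2), I would prove the two inequalities $\ind S^d \leq d$ and $\ind S^d \geq d$ separately. The upper bound is immediate: the identity $\id : S^d \to S^d$ is continuous and equivariant with respect to the antipodal action, so $d$ lies in the set defining the index. For the lower bound, I would argue by contradiction: if $\ind S^d \leq d-1$, then by definition there would exist a continuous equivariant map $S^d \to S^{d-1}$, directly contradicting the Borsuk--Ulam theorem stated just above.

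There is no substantive obstacle here; the proposition is a formal consequence of the definitions combined with Borsuk--Ulam. The only mild subtlety is the boundary case $d = 0$ of part (2): one should confirm that $\ind S^0 = 0$ holds, which it does because $\id : S^0 \to S^0$ is equivariant (so $\ind S^0 \leq 0$) and the index is non-negative by definition. Everything else is bookkeeping about compositions of equivariant maps.
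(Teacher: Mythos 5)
Your proof is correct and matches the paper's own justification, which consists precisely of the two parenthetical remarks in the statement: composing an equivariant $g:B\to S^d$ with $f$ for part (1), and invoking the identity map together with the Borsuk--Ulam theorem for part (2). Your explicit check of equivariance of the composition and your handling of the boundary case $d=0$ are just careful spellings-out of what the paper treats as immediate.
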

So `large' $\Zt$-spaces (as measured by the $\Zt$-index) can't be mapped to `smaller' ones. This gives us a general framework for proving non-embeddability results: suppose we want to show that there is no continuous injective map $f:X\to Y$, for some pair of topological spaces $X$ and $Y$. Consider the deleted products $\topdp X$ and $\topdp Y$ (the former is often called the \emph{configuration space}, and the latter is the \emph{target space} of the problem). If an injective continuous map $f:X\to Y$ exists, we may apply $f$ to each coordinate of the points in $\topdp X$, to obtain a map $\topdp X\to\topdp Y$, which is clearly equivariant (an equivariant map $\topdp X\to\topdp Y$ is called a \emph{test map}). Therefore if we prove that $\ind{\topdp X}>\ind{\topdp Y}$, no test map exists, and any continuous $f:X\to Y$ must not be injective.

We note our choice to work with deleted products of simplicial complexes in this paper, rather than deleted joins. It is true that deleted joins have the advantage of being simplicial complexes themselves, and are often easier to work with (consider for example the deleted join and deleted product of $K_5$). However, the deleted product has the advantage of having dimension lower by $1$ than its deleted join counterpart, resulting in a proof of Theorem \ref{Mat_d=1} which is easier to visualize.

\comment{
\begin{rmk} 
For a simplicial complex $K$, we have $\scdp K\su \topdp{\geo K}$ but usually these spaces aren't equal, therefore $\ind {\scdp K} \le \ind {\topdp{\geo K}}$. However, it can be shown that the two spaces always have the same index.
\end{rmk}}

\section{Main setup and results} 

\subsection{Dual complexes and faithful maps}

Our first goal is to define faithful maps, and show how they relate to representability. Recall that for every simplicial complex $\simp K$ we let $\m{\simp K}$ denote the facets of $\simp K$, and defined the dual of $\simp K$, denoted $\simp K'$, as the nerve of $\m{\simp K}$  (Definition \ref{dual_complex}). We now specify some faces of $\simp K'$:

\begin{df}
Let $\simp K$ be a simplicial complex. If $i$ is a vertex of $\simp K$, let $\cont_i$ denote the set of $\simp K$-facets containing $i$:
$$\cont_i = \set{J\in \m{\simp K} : i\in J}.$$
If $F$ is a face of $\simp K$, we define:
$$\cont_F = \set{J\in \m{\simp K} : F\su J}.$$
\end{df}

Note that $\cont_F$ and $\cont_i$ are always faces of $\simp K'$. Every facet of $\simp K'$ is of the form $\cont_j$ for some vertex $j$.

The motivation behind the dual complex $\simp K'$ is to create the simplest collection of convex sets with $\simp K$ as their nerve. Clearly $\simp K$ is the nerve of the collection $\set{\cont_i}_{i\in V}$, 
so the images of $\set{\geo{\cont_i}}_{i\in V}$ under any map from $\geo{\simp K'}$ which doesn't introduce additional intersections among the images of these simplices, will still have $\simp K$ as the nerve. We call such a map \emph{faithful}:

\begin{df} \label{faithful_definition}
Let $\simp K\su 2^V$ be a simplicial complex. A \emph{faithful} map (for $\simp K$), is a continuous $f:\geo{\simp K'}\to\Rd$, such that for every $I\su V$ which satisfies 
$$\bigcap_{i\in I} f\Par{\geo{\cont_i}}\neq\emp,$$
we have $I\in\simp K$ (equivalently $\underset{i\in I}{\cap} \cont_i\neq\emp$).
\end{df}

\comment{ 
\begin{df} \label{faithful_definition}
Let $\simp K\su 2^V$ be a simplicial complex. A continuous map $f:\geo{\simp K'}\to\Rd$ is \emph{faithful}, if for every $I\su V$ which satisfies 
$$\bigcap_{i\in I} f\Par{\geo{\cont_i}}\neq\emp,$$
we have $I\in\simp K$ (equivalently $\underset{i\in I}{\cap} \cont_i\neq\emp$).
\end{df}}


\begin{thm} \label{faithful_lemma}
Let $\simp K$ be a simplicial complex. Then $\simp K$ is $d$-representable if and only if there exists a map $f:\geo{\simp K'}\to\Rd$ which is linear and faithful.
\end{thm}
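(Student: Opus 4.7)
The plan is to establish both implications via the explicit correspondence $C_i \leftrightarrow f(\geo{\cont_i})$: a linear faithful map yields convex sets as images on one side, and a representation by convex sets yields a map by choosing one point per facet on the other.

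For the direction $(\Leftarrow)$, suppose $f:\geo{\simp K'}\to\Rd$ is linear and faithful, and define $C_i := f(\geo{\cont_i})$. Each $\cont_i$ is a single face of $\simp K'$ since $i\in\bigcap\cont_i \neq \emp$, so $\geo{\cont_i}$ is a geometric simplex and $C_i$ is its image under a simplex-linear map, hence a convex polytope. The implication that $\bigcap_{i\in I} C_i \neq \emp$ forces $I\in\simp K$ is exactly the faithfulness hypothesis. Conversely, if $I\in\simp K$, pick any facet $J\in\m{\simp K}$ containing $I$; then $J\in\cont_i$ for every $i\in I$, so $f(e_J)\in f(\geo{\cont_i}) = C_i$ for every $i\in I$, giving $f(e_J)\in\bigcap_{i\in I} C_i$.

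For the direction $(\Rightarrow)$, assume $\simp K$ is represented by convex sets $\{C_i\}_{i\in V}$. For each vertex $J$ of $\simp K'$ (i.e.\ each facet $J\in\m{\simp K}$) the intersection $\bigcap_{i\in J} C_i$ is non-empty, so I would pick some $p_J$ in this intersection, set $f(e_J):=p_J$, and extend linearly on each simplex of $\simp K'$. The key observation is that for any $i\in V$ and any $J\in\cont_i$, the chosen point $p_J$ lies in $C_i$; hence $f$ sends every vertex of the simplex $\cont_i$ into the convex set $C_i$, and by convexity the linear extension satisfies $f(\geo{\cont_i})\su C_i$. Faithfulness then drops out immediately: if $p\in\bigcap_{i\in I} f(\geo{\cont_i})$, then $p\in\bigcap_{i\in I} C_i$, so $I\in\simp K$ since $\{C_i\}$ represents $\simp K$.

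I do not anticipate a serious obstacle here; the theorem is essentially an unpacking of the definitions, and both directions are handled by the same natural vertex-to-point assignment. The only point worth flagging is that $\cont_i$ must be a single face of $\simp K'$ (so that $\geo{\cont_i}$ is a simplex on which the linear extension is unambiguous and convex combinations remain inside $C_i$), which follows directly from $i\in\bigcap\cont_i$. A subtler aspect, if one wanted to push further, would be whether the $p_J$'s can be chosen with additional properties (e.g.\ in general position), but no such refinement is needed for the statement as given.
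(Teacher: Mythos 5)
Your proposal is correct and takes essentially the same route as the paper: both directions rest on the correspondence $C_i = f\Par{\geo{\cont_i}}$, with the converse built by choosing $p_J\in\bigcap_{i\in J}C_i$ for each facet $J\in\m{\simp K}$, setting $f(e_J)=p_J$, extending linearly, and noting $f\Par{\geo{\cont_i}}\su C_i$ by convexity. The one detail you flag --- that $\cont_i$ is a single face of $\simp K'$ because $i\in\bigcap\cont_i$ --- is recorded in the paper just before the theorem, so nothing is missing on either side.
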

\begin{proof}
W.l.o.g. assume $\simp K\su 2^{[n]}$ (we may label the vertices of $\simp K$ as $\set{1,\ldots,n}$). If $f:\geo{\simp K'}\to\Rd$ is a linear and faithful map, for every $i\in[n]$ let
$$C_i=f\Par{\geo{\cont_i}}.$$
The sets $C_1,\ldots,C_n\su\Rd$ are convex because $f$ is linear, and we claim that $\simp K$ is their nerve. Indeed, every face $I\in\simp K$ is a subset of some facet $J\in \m{\simp K}$. Thus for every $i\in I$ we have $J\in \cont_i$, so $\underset{i\in I}{\cap} \geo{\cont_i}\neq\emp$ and therefore $\underset{i\in I}{\cap} C_i\neq\emp$ as well. If $I\su [n]$ but $I\notin\simp K$, then $\underset{i\in I}{\cap} C_i=\emp$ due to $f$ being faithful.\\
Conversely, suppose that $\simp K$ is the nerve of the convex sets $C_1,\ldots,C_n\su\Rd$. For every facet $J\in \m{\simp K}$, choose some point $p_J\in\underset{i\in J}{\cap} C_i$ (this intersection is non-empty as $J$ is in the nerve). Construct $f:\geo{\simp K'}\to\Rd$ by taking $e_J\mapsto p_J$ and extending linearly. To prove that $f$ is faithful note that for every $i\in [n]$ we have
$$f\Par{\geo{\cont_i}}=\conv{\set{p_J : J\in\cont_i}}\su C_i.$$
So whenever $I\su [n]$ and $\underset{i\in I}{\cap} f\Par{\geo{\cont_i}}$ is non-empty, the intersection $\underset{i\in I}{\cap} C_i$ is also non-empty, thus $I\in\simp K$.
\end{proof}

An example illustrating the proof is given in Figure \ref{simplicial_example} below:

\begin{center}
\begin{tikzpicture}[scale=0.75]
\coordinate (v0) at (-0.7,0);
\filldraw (v0) circle (0pt) node[above] {};
\coordinate (v1) at (0+0,6);
\filldraw (v1) circle (3pt) node[above right] {1};
\coordinate (v2) at (0+0,2);
\filldraw (v2) circle (3pt) node[above right] {2};
\coordinate (v3) at (0+4,2);
\filldraw (v3) circle (3pt) node[above right] {3};
\coordinate (v4) at (0+1,4);
\filldraw (v4) circle (3pt) node[below left] {4};
\coordinate (v5) at (0+2,2);
\filldraw (v5) circle (3pt) node[above right] {5};
\coordinate (v6) at (0+2,0);
\filldraw (v6) circle (3pt) node[below] {6};
\coordinate (v7) at (0+4,0);
\filldraw (v7) circle (3pt) node[below right] {7};
\draw (v1) -- (v4);
\draw (v1) -- (v2);
\draw (v1) -- (v3);
\draw (v4) -- (v5);
\draw (v2) -- (v5);
\draw (v2) -- (v6);
\draw (v5) -- (v3);
\draw (v5) -- (v6);
\draw (v3) -- (v7);
\draw (v6) -- (v7);
\path[fill, opacity=0.35] (v2) -- (v5) -- (v6) -- cycle;
\coordinate (center1) at (7.5+2,6);
\coordinate (center2) at (7.5+0,4);
\coordinate (center3) at (7.5+2,3);
\coordinate (center4) at (7.5+4,4);
\coordinate (center5) at (7.5+2,2);
\coordinate (center6) at (7.5+0,1);
\coordinate (center7) at (7.5+1,0);
\draw[fill, opacity=0.2] (center1) ellipse (2.4 and 0.6);
\draw[fill, opacity=0.2] (center2) ellipse (0.6 and 2.4);
\draw[fill, opacity=0.2] (center3) ellipse (0.6 and 3.5);
\draw[fill, opacity=0.2] (center4) ellipse (0.6 and 2.4);
\draw[fill, opacity=0.2] (center5) ellipse (2.4 and 0.6);
\draw[fill, opacity=0.2] (center6) ellipse (0.6 and 1.5);
\draw[fill, opacity=0.2] (center7) ellipse (1.2 and 0.6);
\node at (7.5+1,6) {$C_1$};
\node at (center2) {$C_2$};
\node at (7.5+2,4) {$C_3$};
\node at (center4) {$C_4$};
\node at (7.5+1,2) {$C_5$};
\node at (center6) {$C_6$};
\node at (center7) {$C_7$};
\coordinate (center1) at (15.5+2,6);
\coordinate (center2) at (15.5+0,4);
\coordinate (center3) at (15.5+2,3);
\coordinate (center4) at (15.5+4,4);
\coordinate (center5) at (15.5+2,2);
\coordinate (center6) at (15.5+0,1);
\coordinate (center7) at (15.5+1,0);
\draw[fill, opacity=0.1] (center1) ellipse (2.4 and 0.6);
\draw[fill, opacity=0.1] (center2) ellipse (0.6 and 2.4);
\draw[fill, opacity=0.1] (center3) ellipse (0.6 and 3.5);
\draw[fill, opacity=0.1] (center4) ellipse (0.6 and 2.4);
\draw[fill, opacity=0.1] (center5) ellipse (2.4 and 0.6);
\draw[fill, opacity=0.1] (center6) ellipse (0.6 and 1.5);
\draw[fill, opacity=0.1] (center7) ellipse (1.2 and 0.6);

\coordinate (v12) at (15.5+0,6.2);
\filldraw (v12) circle (3pt) node[above left] {12};
\coordinate (v13) at (15.5+1.8,5.7);
\filldraw (v13) circle (3pt) node[below right] {13};
\coordinate (v14) at (15.5+4,6.2);
\filldraw (v14) circle (3pt) node[above right] {14};
\coordinate (v256) at (15.5-0.1,2);
\filldraw (v256) circle (3pt) node[below left] {256};
\coordinate (v35) at (15.5+2.4,2.4);
\filldraw (v35) circle (3pt) node[above right] {35};
\coordinate (v45) at (15.5+4,1.9);
\filldraw (v45) circle (3pt) node[right] {45};
\coordinate (v67) at (15.5+0.2,0);
\filldraw (v67) circle (3pt) node[left] {67};
\coordinate (v37) at (15.5+1.8,0.2);
\filldraw (v37) circle (3pt) node[right] {37};

\draw [line width=0.5mm] (v14) -- (v12);
\draw [line width=0.5mm] (v14) -- (v13);
\draw [line width=0.5mm] (v12) -- (v13);
\draw [line width=0.5mm] (v14) -- (v45);
\draw [line width=0.5mm] (v12) -- (v256);
\draw [line width=0.5mm] (v45) -- (v35);
\draw [line width=0.5mm] (v45) -- (v256);
\draw [line width=0.5mm] (v35) -- (v256);
\draw [line width=0.5mm] (v13) -- (v35);
\draw [line width=0.5mm] (v13) -- (v37);
\draw [line width=0.5mm] (v35) -- (v37);
\draw [line width=0.5mm] (v256) -- (v67);
\draw [line width=0.5mm] (v37) -- (v67);

\path[fill, opacity=0.4] (v14) -- (v12) -- (v13) -- cycle;
\path[fill, opacity=0.4] (v45) -- (v35) -- (v256) -- cycle;
\path[fill, opacity=0.4] (v13) -- (v35) -- (v37) -- cycle;
\end{tikzpicture}
\scriptsize
Figure \refstepcounter{figure_num}\label{simplicial_example}\arabic{figure_num}
\end{center}

From left to right, we have a simplicial complex $\simp K$, seven convex sets in the plane which have $\simp K$ as the nerve, and the image of $\geo{\simp K'}$ under a linear faithful map within the original convex sets.

It is also worth noting that $\geo{\simp K}$ and $\geo{\simp K'}$ are homotopy equivalent (due to the nerve theorem, see e.g. \cite[Theorem 4.4.4]{Ma}). While we don't need the to prove this equivalence, we will use the following:

\begin{lem} \label{dual_lemma}
Let $\simp K$ be a simplicial complex. There exists a map $\iota_{\simp K}:\geo{\simp K}\to\geo{\simp K'}$, such that for every $x\in\geo{\simp K}$, we have
$$\supp(x)\su\bigcap\supp\Par{\iota_{\simp K}(x)}.$$
\end{lem}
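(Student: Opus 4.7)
The plan is to exploit that every non-empty face of a simplicial complex lies in at least one facet, and construct $\iota_{\simp K}$ by a pointwise choice. For each non-empty face $F\in\simp K$, fix some facet $J(F)\in\m{\simp K}$ with $F\su J(F)$ --- this is possible since $V$ is finite and every non-empty face extends to a maximal one. Define $\iota_{\simp K}:\geo{\simp K}\to\geo{\simp K'}$ by $\iota_{\simp K}(x)=e_{J(\supp(x))}$, sending every point to the vertex of $\geo{\simp K'}$ labeled by the chosen facet that contains its support.

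The verification is immediate: for any $x\in\geo{\simp K}$, writing $F=\supp(x)$, the image $\iota_{\simp K}(x)=e_{J(F)}$ is a vertex of $\geo{\simp K'}$, so $\supp\Par{\iota_{\simp K}(x)}=\set{J(F)}$ is a singleton face of $\simp K'$. Hence $\bigcap\supp\Par{\iota_{\simp K}(x)}=J(F)$, which contains $F=\supp(x)$ by the defining property of $J$, yielding the required inclusion.

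The key point to recognize --- and my main concern upon first reading --- is that the lemma calls for a ``map'' rather than a ``continuous map'': the adjective ``continuous'' is used explicitly in other definitions of the paper (e.g.\ Definition \ref{d-Mat}), so its omission here appears deliberate. This is essential, because a continuous $\iota_{\simp K}$ with the stated property need not exist in general. Already for $\simp K$ equal to the boundary of a triangle, the support condition forces $\iota_{\simp K}$ to be constant on each open edge of $\geo{\simp K}$, producing incompatible limits at each vertex. Once one accepts that $\iota_{\simp K}$ is just a function, the construction above suffices with no further subtleties.
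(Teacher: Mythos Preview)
Your construction is correct for the lemma read literally (``map'' without ``continuous''), and your counterexample --- the boundary of a triangle --- is valid: on each open edge the condition pins $\iota_{\simp K}$ to the single vertex of $\geo{\simp K'}$ labeled by the unique facet containing that edge, so no continuous choice can agree at the corners.

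The paper, however, takes a very different route: it builds $\iota_{\simp K}$ \emph{continuously}, by sending each vertex $e_F$ of $\sd(\simp K)$ to the barycenter of $\geo{\cont_F}$ in $\geo{\simp K'}$, extending linearly, and composing with the canonical homeomorphism $\geo{\simp K}\cong\geo{\sd(\simp K)}$. Continuity is not incidental --- in the applications (Proposition~\ref{asteroidal_Mat} and the paragraph after Theorem~\ref{codim_1}) $\iota_{\simp K}$ is composed with other continuous maps to manufacture equivariant maps between $\Zt$-spaces, and the $\Zt$-index conclusions there need those maps to be continuous. So your discontinuous $\iota_{\simp K}$, while satisfying the lemma verbatim, cannot be plugged into the places where the lemma is invoked.

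More interestingly, your counterexample shows that the continuous version of the lemma is actually \emph{false}, and indeed the paper's own argument contains an error in its final displayed chain
\[
F_n\ \su\ \bigcap \cont_{F_n}\ \su\ \cdots\ \su\ \bigcap \cont_{F_1}\ \su\ \bigcap\supp\Par{\iota_{\simp K}(x)}.
\]
The middle inclusions are reversed: from $\cont_{F_n}\su\cdots\su\cont_{F_1}$ one gets $\bigcap\cont_{F_1}\su\cdots\su\bigcap\cont_{F_n}$, not the other way. Concretely, in your triangle example a point $x$ with $\supp(x)=\{1,2\}$ lying near vertex $1$ is sent by the paper's map to a point of support $\cont_{\{1\}}=\{12,13\}$, giving $\bigcap\supp(\iota_{\simp K}(x))=\{1\}\not\supseteq\supp(x)$. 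You have therefore not merely produced an alternative argument but uncovered a genuine gap: either the lemma needs a corrected statement, or the downstream propositions need a different bridge from $\geo{\simp K}$ to $\geo{\simp K'}$.
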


\begin{proof}
Using the canonical homeomorphism between $\geo{\simp K}$ and $\geo{\sd(\simp K)}$, we may in fact define a linear map $g:\geo{\sd(\simp K)}\to\geo{\simp K'}$. For every $\emp\neq F\in\simp K$, let $g(e_F)\in\geo{\simp K'}$ be some point with $\supp(g(e_F))=\cont_F$ (we can choose $g(e_F)$ to be the center of the simplex $\geo{\cont_F}$). This indeed extends to a linear $g:\geo{\sd(\simp K)}\to\geo{\simp K'}$: if $x\in\geo{\sd(\simp K)}$ is supported by $\set{F_1,\ldots,F_n}$ such that $\emp\subsetneq F_1\subsetneq\cdots\subsetneq F_n$, we have $\cont_{F_n}\su\ldots\su\cont_{F_1}$, therefore $g(x)$ is well defined in $\geo{\simp K'}$, and
\begin{equation} \label{g_0_prop}
\supp\Par{g(x)}\su\cont_{F_1}.
\end{equation}
By Lemma \ref{barycentric_lemma}, if $x\in\geo{\simp K}$ is supported by $F\in\simp K$, then the corresponding point in $\geo{\sd(\simp K)}$ is supported by some $\set{F_1,\ldots,F_n}$, where $\emp\subsetneq F_1\subsetneq\cdots\subsetneq F_n=F$. So (\ref{g_0_prop}) becomes
$$\supp\Par{\iota_{\simp K}(x)}\su\cont_{F_1}$$
and therefore
$$\supp(x)=F_n\su\bigcap \cont_{F_n}\su\ldots\su\bigcap \cont_{F_1}\su\bigcap\supp\Par{\iota_{\simp K}(x)}.$$
\end{proof}

\subsection{Configuration spaces and $d$-Matou\v sek complexes}

Recall Definitions \ref{configuration_space}-\ref{d-Mat}: given a simplicial complex $\simp K$, we let
$$\hat{\simp K} = \set{(x,y)\in\geo{\simp K'}^2\ :\ \Par{\bigcap\ \supp(x)}\cup\Par{\bigcap\ \supp(y)}\notin \simp K},$$
and say that $\simp K$ is $d$-Matou\v sek if there exists a continuous map $f:\hat{\simp K}\to S^{d-1}$ which is equivariant (that is, $f(y,x)=-f(x,y)$ for every $(x,y)\in\hat{\simp K}$). Equivalently, $\ind{\hat{\simp K}}\leq d-1$, where the involution on $\hat{\simp K}$ is given by $(x,y)\mapsto(y,x)$. We now prove that $\hat{\simp K}$ is the configuration space consisting of pairs that can't be identified under a faithful map:

\begin{lem} \label{configuration_lemma}
Let $\simp K$ be a simplicial complex, and suppose that $f:\geo{\simp K'}\to\Rd$ has $f(x)=f(y)$ for some $(x,y)\in\hat{\simp K}$. Then $f$ is not faithful.
\end{lem}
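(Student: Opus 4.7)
The plan is to exhibit an explicit $I\su V$ that violates the faithfulness condition in Definition \ref{faithful_definition}. The natural candidate, handed to us by the hypothesis $(x,y)\in\hat{\simp K}$, is
$$I \;=\; \Par{\bigcap\supp(x)}\,\cup\,\Par{\bigcap\supp(y)},$$
which lies outside $\simp K$ by the very definition of $\hat{\simp K}$. It remains to show that $\bigcap_{i\in I}f\Par{\geo{\cont_i}}$ is non-empty; the point $f(x)=f(y)$ will be exhibited as a member of each of the sets $f\Par{\geo{\cont_i}}$ for $i\in I$.

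First I would unpack what it means for $i$ to belong to $\bigcap\supp(x)$. Since $\supp(x)$ is a face of $\simp K'$, it is a collection of facets of $\simp K$; the condition $i\in\bigcap\supp(x)$ says that every facet $J\in\supp(x)$ contains $i$, i.e.\ $\supp(x)\su\cont_i$. Consequently $x\in\geo{\supp(x)}\su\geo{\cont_i}$, so $f(x)\in f\Par{\geo{\cont_i}}$. The symmetric statement holds for $y$: if $i\in\bigcap\supp(y)$ then $f(y)\in f\Par{\geo{\cont_i}}$.

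Combining these two observations with the hypothesis $f(x)=f(y)$, this common point lies in $f\Par{\geo{\cont_i}}$ for every $i\in I$. Hence $\bigcap_{i\in I}f\Par{\geo{\cont_i}}$ contains $f(x)$, in particular is non-empty, while $I\notin\simp K$. By Definition \ref{faithful_definition} this is exactly a witness that $f$ is not faithful.

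There is no real obstacle here: the argument is a direct unwinding of definitions, and the only mild subtlety is remembering that a ``face of $\simp K'$'' literally is a collection of facets of $\simp K$, so that $\bigcap\supp(x)$ and $\supp(x)\su\cont_i$ are meaningful and mutually convertible statements. The design choice behind $\hat{\simp K}$ is precisely to record those pairs $(x,y)$ for which the union $I$ above fails to be a face, which is exactly what makes this lemma a one-line conceptual observation once the set-up is in place.
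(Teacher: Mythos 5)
Your proof is correct and follows essentially the same route as the paper's: both take $I=\Par{\bigcap\supp(x)}\cup\Par{\bigcap\supp(y)}$ as the witness, observe that $i\in\bigcap\supp(x)$ forces $\supp(x)\su\cont_i$, and conclude that the common point $f(x)=f(y)$ lies in every $f\Par{\geo{\cont_i}}$ for $i\in I$ while $I\notin\simp K$. Your pointwise phrasing (tracking the single point $f(x)$ rather than the chain of set inclusions the paper writes out) is a cosmetic difference only.
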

\begin{proof}
Let $F=\bigcap\supp(x)$ and $G=\bigcap\supp(y)$. Note that $i\in F$ implies $\supp(x)\su\cont_i$ (for every $i\in V(\simp K)$). Therefore $\supp(x)\su\bigcap_{i\in F}\cont_i$ and similarly $\supp(y)\su\bigcap_{j\in G}\cont_j$. We now deduce
$$\emp\neq f\Par{\geo{\supp(x)}}\cap f\Par{\geo{\supp(y)}}\su f\Par{\bigcap_{i\in F}\geo{\cont_i}}\cap f\Par{\bigcap_{j\in G}\geo{\cont_j}}\su \bigcap_{v\in F\cup G}f\Par{\geo{\cont_v}},$$
but $(x,y)\in\hat{\simp K}$ implies $\Par{\cap\ \supp(x)}\cup\Par{\cap\ \supp(y)}=F\cup G\notin\simp K$, so $f$ is not faithful.
\end{proof}

This implies that whenever a faithful map $\geo{\simp K'}\to\Rd$ exists (and in particular if $\simp K$ is $d$-representable), $\simp K$ is $d$-Matou\v sek:

\begin{prop} \label{faithful_Mat}
Let $d\in\bbN$, and let $\simp K$ be a simplicial complex such that there exists a faithful map $\geo{\simp K'}\to\Rd$. Then $\simp K$ is $d$-Matou\v sek.
\end{prop}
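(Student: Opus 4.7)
The plan is to use the standard ``test map'' construction: given a faithful map $f:\geo{\simp K'}\to\Rd$, I define
$$g:\hat{\simp K}\to S^{d-1},\qquad g(x,y)=\frac{f(x)-f(y)}{\geo{f(x)-f(y)}},$$
and verify that this map is well-defined, continuous, and equivariant.

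The only nontrivial point is that $g$ is well-defined, i.e.\ that $f(x)\neq f(y)$ for every $(x,y)\in\hat{\simp K}$. This is exactly the content of Lemma \ref{configuration_lemma}: if $f$ were to identify some pair $(x,y)\in\hat{\simp K}$, then $f$ would fail to be faithful, contradicting our assumption. So the denominator in the definition of $g$ never vanishes on $\hat{\simp K}$. Continuity of $g$ then follows because $f$ is continuous and the normalization $v\mapsto v/\geo v$ is continuous on $\Rd\setminus\set 0$. Equivariance is immediate from the definition:
$$g(y,x)=\frac{f(y)-f(x)}{\geo{f(y)-f(x)}}=-g(x,y).$$
Hence $\ind{\hat{\simp K}}\leq d-1$, which is the definition of $\simp K$ being $d$-Matou\v sek.

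The main (and only) obstacle is ensuring the non-vanishing of $f(x)-f(y)$ on the configuration space, but this has already been set up: Definition \ref{configuration_space} was tailored precisely so that Lemma \ref{configuration_lemma} supplies this property for any faithful $f$. There are no routine calculations to grind through; the proposition is essentially a packaging of the preceding lemma together with the usual normalized-difference test map.
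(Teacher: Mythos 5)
Your proposal is correct and is essentially identical to the paper's own proof: both define the normalized-difference test map $(x,y)\mapsto\bigl(f(x)-f(y)\bigr)/\left|f(x)-f(y)\right|$ on $\hat{\simp K}$, invoke Lemma \ref{configuration_lemma} for well-definedness, and note equivariance. (One trivial notational slip: you wrote the norm using the paper's $\geo{\cdot}$ macro, which the paper reserves for geometric realization; the norm is $\left|\cdot\right|$.)
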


\begin{proof}
If $f:\geo{\simp K'}\to\Rd$ is faithful, define $\hat f:\hat{\simp K}\to S^{d-1}$, by
$$\hat f(x,y)=\frac{f(x)-f(y)}{\left|f(x)-f(y)\right|}$$
for every $(x,y)\in\hat{\simp K}$ (where $\left|\cdot\right|$ denotes the norm in $\Rd$). We know that $\hat f$ is well-defined from Lemma \ref{configuration_lemma}, and clearly $\hat f$ is equivariant.
\end{proof}

Theorem \ref{faithful_lemma} now implies:

\begin{prop} \label{basic_Mat}
For any $d\in\bbN$, every $d$-representable simplicial complex is $d$-Matou\v sek.
\end{prop}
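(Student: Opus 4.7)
The proof will be essentially immediate by chaining the two results that immediately precede the statement. The plan is the following: given a $d$-representable simplicial complex $\simp K$, apply Theorem \ref{faithful_lemma} to extract a linear (in particular continuous) faithful map $f:\geo{\simp K'}\to\Rd$; then feed this $f$ into Proposition \ref{faithful_Mat} to conclude that $\simp K$ is $d$-Matou\v sek.

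In slightly more detail, the only thing to check is that the hypothesis of Proposition \ref{faithful_Mat} is fulfilled. But Theorem \ref{faithful_lemma} gives us more than we need: it produces a map which is both linear and faithful, and Proposition \ref{faithful_Mat} only requires faithfulness (together with continuity, which follows from linearity). So there is no mismatch of hypotheses to worry about.

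I expect no real obstacle here; the work has already been done in Lemma \ref{configuration_lemma} (which shows that $\hat{\simp K}$ really is the configuration space of pairs that cannot be identified faithfully) and its corollary Proposition \ref{faithful_Mat} (which packages this into the explicit equivariant map $(x,y)\mapsto(f(x)-f(y))/|f(x)-f(y)|$ from $\hat{\simp K}$ to $S^{d-1}$). The proof will therefore consist of a single sentence invoking these two results in sequence, with perhaps a brief reminder that linear maps are continuous so that the output of Theorem \ref{faithful_lemma} is admissible input to Proposition \ref{faithful_Mat}.
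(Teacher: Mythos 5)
Your proposal is correct and matches the paper exactly: the paper states Proposition \ref{basic_Mat} as an immediate consequence of Theorem \ref{faithful_lemma} combined with Proposition \ref{faithful_Mat}, with no further argument. (Your remark about continuity is harmless but unnecessary, since Definition \ref{faithful_definition} already builds continuity into the notion of a faithful map.)
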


\begin{rmk} \label{induced_sub}
Note that if $\simp K$ is $d$-Matou\v sek and $\simp L$ is an induced subcomplex, that is $\simp L=\simp K\cap 2^W$ for some $W\su V(\simp K)$, then $\simp L$ is also $d$-Matou\v sek. This is because every maximal face of $\simp L$ is of the form $W\cap J$, where $J\in\m{\simp K}$ is some maximal face in $\simp K$. This gives us a map $\m{\simp L}\to\m{\simp K}$, and it is straightforward to verify that by extending to $\geo{\simp L'}$ and then applying on each coordinate, we obtain an equivariant $\hat{\simp L}\to\hat{\simp K}$.
\end{rmk}

\subsection{Good covers}
Note that in Proposition \ref{faithful_Mat} we only required a faithful map $\geo{\simp K'}\to\Rd$, while $\simp K$ is also $d$-representable iff there exists a faithful map which is also \emph{linear}. This implies that the collection of $d$-Matou\v sek complexes includes a larger class of complexes which satisfy some topological generalizations of (convex) representability. Indeed, every complex which is the nerve of a good cover in $\Rd$ is $d$-Matou\v sek:

A collection of sets $U_1,U_2,\ldots,U_n\su\Rd$ is a \emph{good cover}, if every intersection of sets $\cap_{i\in I}U_i$ is either  empty or homeomorphic to an open $d$-dimensional ball. As in the convex case, the nerve of these sets is the simplicial complex whose faces are sets $I\su[n]$ for which $\cap_{i\in I}U_i\neq\emp$.


\begin{lem} \label{good_cover_faithful}
Let $\simp K$ the nerve of a good cover in $\Rd$. Then there exists a faithful map $f:\geo{\simp K'}\to\Rd$.
\end{lem}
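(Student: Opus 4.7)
The plan is to emulate the construction in the proof of Theorem \ref{faithful_lemma}, but to replace the linear extension over $\geo{\simp K'}$ by an inductive extension skeleton by skeleton that exploits the contractibility of every non-empty intersection of the $U_i$'s. The construction should be guided by the following target invariant: for every $\alpha \in \simp K'$, the image $f(\geo \alpha)$ will lie in $\bigcap_{i \in \bigcap \alpha} U_i$. Note that since $\alpha \in \simp K'$, each facet $J \in \alpha$ belongs to $\simp K$, so $\bigcap \alpha \subseteq J$ is itself a face of $\simp K$; therefore $\bigcap_{i \in \bigcap \alpha} U_i$ is non-empty (it is an intersection recorded by the nerve) and, by the good cover hypothesis, contractible.

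Once the invariant is established, faithfulness is immediate. Given any vertex $i \in V$ and any face $\alpha \su \cont_i$, every facet in $\alpha$ contains $i$, so $i \in \bigcap \alpha$ and hence $f(\geo \alpha) \su \bigcap_{j \in \bigcap \alpha} U_j \su U_i$. Therefore $f(\geo{\cont_i}) \su U_i$ for all $i$. If $\bigcap_{i \in I} f(\geo{\cont_i}) \neq \emp$ for some $I \su V$, then $\bigcap_{i \in I} U_i \neq \emp$, so $I$ lies in the nerve of the cover, i.e.\ $I \in \simp K$.

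The construction of $f$ proceeds by induction on the dimension of the skeleta of $\simp K'$. At dimension $0$, for each vertex $J \in \m{\simp K}$ of $\simp K'$ I would set $f(e_J)$ to be any point in $\bigcap_{i \in J} U_i$, which is non-empty because $J \in \simp K$. For the inductive step, suppose $f$ has been defined on the $(k-1)$-skeleton so that the invariant holds, and let $\alpha \in \simp K'$ be a $k$-face. Every proper subface $\beta \subsetneq \alpha$ satisfies $\bigcap \beta \supseteq \bigcap \alpha$, and therefore $f(\geo \beta) \su \bigcap_{i \in \bigcap \beta} U_i \su \bigcap_{i \in \bigcap \alpha} U_i$. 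Consequently the already-defined map sends $\partial \geo \alpha \cong S^{k-1}$ into the contractible space $\bigcap_{i \in \bigcap \alpha} U_i$, and this map extends to a continuous map on the disk $\geo \alpha \cong D^k$ landing in the same set, preserving the invariant.

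The main (mild) obstacle is the extension step: producing a continuous extension from $\partial \geo \alpha$ to $\geo \alpha$ whose image remains inside the prescribed target. Since any continuous map from a sphere into a contractible space is null-homotopic and such a null-homotopy furnishes an extension to the disk, the obstruction vanishes at every stage; the rest of the argument is bookkeeping to verify that the simplex-by-simplex definitions fit together continuously on $\geo{\simp K'}$, which follows from the usual description of $\geo{\simp K'}$ as the pushout of its simplices along their boundaries.
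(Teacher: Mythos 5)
Your proposal is correct and follows essentially the same route as the paper's proof: the same invariant $f(\geo\al)\su\bigcap_{i\in\cap\al}U_i$, the same choice of points $p_J\in\bigcap_{i\in J}U_i$ for facets $J\in\m{\simp K}$, and the same skeleton-by-skeleton extension of $f$ across each simplex using contractibility of the target intersection, followed by the identical faithfulness check via $f(\geo{\cont_i})\su U_i$. Your added observations (that $\bigcap\al\su J$ is a face of $\simp K$ so the target is non-empty, and the explicit null-homotopy justification for the disk extension) merely make explicit details the paper leaves implicit.
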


\begin{proof}
Let $\simp K$ be the nerve of a good cover $U_1,\ldots,U_n\su\Rd$, and we will construct a faithful $f:\geo{\simp K'}\to\Rd$.\\
First, choose some $p_J\in \cap_{i\in J} U_i$ for every $J\in\m{\simp K}$, and let $f(e_J)=p_J$. Continue constructing $f$ face by face, inductively on the dimension of the faces $n$: suppose that $f$ has already been defined on every simplex $\geo{\al}$ such that $\al\in \simp K'$ is a face of dimension $<n$. Furthermore, assume our $f$ satisfies
\begin{equation} \label{faithful_eq}
f(\geo \al)\su \bigcap_{i\in\cap\al} U_i
\end{equation}
for every such $\al$. Our next task is to extend the domain of $f$ to some simplex $\geo{\sigma}$, where $\sigma\in\simp K'$ is a face of dimension $n>0$. We have already defined $f|_{\geo\al}$ for every $\al\subsetneq\sigma$, and as $\al\su\sigma$ implies $\cap\sigma\su\cap\al$, we deduce
$$f(\geo{\al})\su \bigcap_{i\in\cap\al} U_i\su \bigcap_{i\in\cap\sigma} U_i.$$
This tells us that $f(\partial\geo\sigma)\su\cap_{i\in\cap\sigma} U_i$, and since $\cap_{i\in\cap\sigma} U_i$ is contractible, we may extend $f$ to $\geo{\sigma}$ within $\cap_{i\in\cap\sigma} U_i$ as required.\\
We finally obtain a map $f:\geo{\simp K'}\to\Rd$, satisfying (\ref{faithful_eq}) for every $\al\in\simp K'$. In particular, $f(\geo{\cont_i})\su U_i$ for every $i=1,2,\ldots,n$, and we conclude that $f$ is faithful: if $I\su[n]$ and $\cap_{i\in I} f(\geo{\cont_i})\neq\emp$ then also $\cap_{i\in I} U_i\neq\emp$, thus $I\in\simp K$.
\end{proof}

\begin{cor} \label{good_cover_Mat}
Let $\simp K$ be a simplicial complex which is the nerve of a good cover in $\Rd$. Then $\simp K$ is $d$-Matou\v sek.
\end{cor}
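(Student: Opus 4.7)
The statement is essentially an immediate composition of the two preceding results, so my plan is short: chain Lemma \ref{good_cover_faithful} with Proposition \ref{faithful_Mat}. Given the hypothesis that $\simp K$ is the nerve of a good cover $U_1,\ldots,U_n \subseteq \Rd$, Lemma \ref{good_cover_faithful} produces a faithful (but not necessarily linear) map $f \colon \geo{\simp K'} \to \Rd$. Proposition \ref{faithful_Mat} then takes such an $f$ and manufactures an equivariant map $\hat f \colon \hat{\simp K} \to S^{d-1}$ defined by the normalized difference $(f(x)-f(y))/|f(x)-f(y)|$, which by definition means $\simp K$ is $d$-Matou\v sek.

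So the proof I would write is simply: apply Lemma \ref{good_cover_faithful} to obtain a faithful map $\geo{\simp K'}\to\Rd$, then apply Proposition \ref{faithful_Mat} to conclude $\simp K$ is $d$-Matou\v sek.

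There is no real obstacle here, since the conceptual content is entirely contained in the two cited results. The only thing worth emphasizing in the exposition is the contrast with Proposition \ref{basic_Mat} (the convex case): in that proposition one invokes Theorem \ref{faithful_lemma} to supply a \emph{linear} faithful map, whereas here only continuity suffices, and the good-cover hypothesis is used through the contractibility (hence null-homotopy) of each non-empty intersection, which drives the inductive extension in Lemma \ref{good_cover_faithful}. This observation also foreshadows that the class of $d$-Matou\v sek complexes strictly contains the $d$-representable ones, as the paper will illustrate via Example \ref{bad_cover}.
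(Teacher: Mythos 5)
Your proof is correct and matches the paper's own (implicit) argument exactly: the corollary is stated immediately after Lemma \ref{good_cover_faithful} precisely so that it follows by composing that lemma with Proposition \ref{faithful_Mat}. Nothing further is needed.
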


\begin{rmk} \label{almost_good_cover}
The contractibility of all non-empty intersections is a sufficient condition allowing us to construct a faithful map for the nerve, but it isn't a necessary one. Even in the proof of Lemma \ref{good_cover_faithful}, we only needed that $\cap_{i\in\cap\sigma}U_i$ is contractible for faces $\sigma\in\simp K'$ of dimension $n>0$. Thus if $\simp K\su 2^V$ is the nerve of a collection $\set{U_i}_{i\in V}$ in $\Rd$, and $\cap_{i\in F} U_i$ is contractible for every non-maximal face $F\in\simp K$, the Lemma still holds with the same proof.
\end{rmk}
The converse of Lemma \ref{good_cover_faithful} (and of Corollary \ref{good_cover_Mat}) doesn't hold, as shown in Example \ref{bad_cover}. We can, however, classify which graphs are nerves of good covers in $\Rt$.

\subsubsection{Graphs that are nerves of good covers in $\Rt$}

We will take a short detour from our exploration of faithful maps.
\begin{df}
Let $\simp G=(V,E)$ be a graph, and let $\simp H<\simp G'$ be a subgraph of the dual of $\simp G$. For every $v\in V$, let $\simp T_v$ denote the graph induced by $\simp H$ on $\set{e\in E\ :\ v\in e}\su V(\simp G')$.\\
We say that $\simp H$ is \emph{a model} (for $\simp G$), if every $\simp T_v$ is a tree.
\end{df}

\begin{thm}
Let $\simp G=(V,E)$ be a graph. Then $\simp G$ is the nerve of a good cover in $\Rt$ if and only if there exists a \emph{planar} model for $\simp G$.
\end{thm}

This gives us an algorithm that determines which graphs are nerves of good covers in $\Rt$: simply check the planarity of every model. In contrast, the general problem of recognizing good covers is undecidable (see \cite{TT}, as well as \cite{FWZ} for the related problem of embeddability).

\begin{proof}
Let $\simp H$ be a planar model for $\simp G$, and let $f:\geo{\simp H}\to\Rt$ be an embedding. We may assume that $f$ is linear. For every $v\in V$, we know that $\simp T_v<\simp H$ is a tree, and note that for any $v,w\in V$, the intersection $f(\simp T_v)\cap f(\simp T_w)$ consists of the single point $f(e)\in\Rt$ if $e=\set{v,w}\in E$, and is empty otherwise (because $f$ is an embedding).\\
This allows us to define $U_v\su\Rt$ as the $\varepsilon$ neighborhood of $f(\simp T_v)$, where $\varepsilon>0$ is sufficiently small, so that $\set{U_v}_{v\in V}$ is a good cover and $U_v\cap U_w$ is empty when $v\neq w$ and $\set{v,w}\notin E$.

On the other hand, suppose $\simp G$ is the nerve of a good cover $\calU=\set{U_v}_{v\in V}$ in $\Rt$. We may assume that every set $U_v$ (and therefore every non-empty intersection among sets from $\calU$) is bounded by a piecewise linear closed simple curve. 
We will consider part of the adjacency graph of the regions bounded by these curves. For every $x\in\Rt$, let $c(x)=\set{v\in V: x\in U_v}$ denote the \emph{color} of $x$. Define a region in $R\su\Rt$ to be a maximal connected open set, with constant color, which we denote $c(R)$ (see Figure \ref{good_cover_example}).
\begin{center}
\refstepcounter{figure_num}\label{good_cover_example}
\begin{tikzpicture}[scale=0.95]

\coordinate (p11) at (0,0);
\coordinate (p12) at (12,0);
\coordinate (p13) at (12,3);
\coordinate (p14) at (0,3);%
\path[fill, opacity=0.2] (p11) to (p12) to (p13) to (p14) to cycle;
\draw (p11) -- (p12) -- (p13) -- (p14) -- cycle;
\node (U1a) at (0.875,0.5) {$U_1$};
\node (U1b) at (4.5,1.5) {$U_1$};
\node (U1c) at (8.125,0.5) {$U_1$};
\node (U1d) at (11,2.5) {$U_1$};

\coordinate (p21) at (1,4);
\coordinate (p22) at (1.16,3.2);
\coordinate (p23) at (1.84,-0.2);
\coordinate (p24) at (2,-1);
\coordinate (p25) at (7,-1);
\coordinate (p26) at (7.16,-0.2);
\coordinate (p27) at (7.84,3.2);
\coordinate (p28) at (8,4);
\coordinate (p29) at (7,4);
\coordinate (p210) at (6.733,3.2);
\coordinate (p211) at (6,1);
\coordinate (p212) at (3,1);
\coordinate (p213) at (2.267,3.2);
\coordinate (p214) at (2,4);%
\path[fill, opacity=0.2] (p21) to (p22) to (p23) to (p24) to (p25) to (p26) to (p27) to (p28) to (p29) to (p210) to (p211) to (p212) to (p213) to (p214) to cycle;
\draw[dashed] (p22) -- (p21);
\draw (p22) -- (p23);
\draw[dashed] (p24) -- (p23);
\draw[dashed] (p25) -- (p26);
\draw (p26) -- (p27);
\draw[dashed] (p27) -- (p28);
\draw[dashed] (p210) -- (p29);
\draw (p210) -- (p211) -- (p212) -- (p213);
\draw[dashed] (p213) -- (p214);
\node (U2a) at (1.65,3.5) {$U_2$};
\node (U2b) at (4.5,-0.5) {$U_2$};
\node (U2c) at (7.35,3.5) {$U_2$};
\node (U12) at (4.5,0.5) {$U_1\cap U_2$};

\coordinate (p31) at (-1,2);
\coordinate (p32) at (-0.2,2);
\coordinate (p33) at (1.2,2);
\coordinate (p34) at (1.2,1);
\coordinate (p35) at (-0.2,1);
\coordinate (p36) at (-1,1);%
\path[fill, opacity=0.2] (p31) to (p32) to (p33) to (p34) to (p35) to (p36) to cycle;
\draw[dashed] (p32) -- (p31);
\draw (p32) -- (p33) -- (p34) -- (p35);
\draw[dashed] (p35) -- (p36);
\node (U3) at (-0.5,1.5) {$U_3$};
\node (U13) at (0.6,1.5) {\scriptsize{$U_1\cap U_3$}};

\coordinate (p41) at (3,4);
\coordinate (p42) at (3,3.2);
\coordinate (p43) at (3,2);
\coordinate (p44) at (4.2,2);
\coordinate (p45) at (4.2,3.2);
\coordinate (p46) at (4.2,4);%
\path[fill, opacity=0.2] (p41) to (p42) to (p43) to (p44) to (p45) to (p46) to cycle;
\draw[dashed] (p42) -- (p41);
\draw (p42) -- (p43) -- (p44) -- (p45);
\draw[dashed] (p45) -- (p46);
\node (U4) at (3.6,3.5) {$U_4$};
\node (U14) at (3.6,2.5) {\scriptsize{$U_1\cap U_4$}};

\coordinate (p51) at (4.8,4);
\coordinate (p52) at (4.8,3.2);
\coordinate (p53) at (4.8,2);
\coordinate (p54) at (6,2);
\coordinate (p55) at (6,3.2);
\coordinate (p56) at (6,4);%
\path[fill, opacity=0.2] (p51) to (p52) to (p53) to (p54) to (p55) to (p56) to cycle;
\draw[dashed] (p52) -- (p51);
\draw (p52) -- (p53) -- (p54) -- (p55);
\draw[dashed] (p55) -- (p56);
\node (U5) at (5.4,3.5) {$U_5$};
\node (U15) at (5.4,2.5) {\scriptsize{$U_1\cap U_5$}};

\coordinate (p61) at (8.9,4);
\coordinate (p62) at (8.9,3.2);
\coordinate (p63) at (8.9,-0.2);
\coordinate (p64) at (8.9,-1);
\coordinate (p65) at (10.1,-1);
\coordinate (p66) at (10.1,-0.2);
\coordinate (p67) at (10.1,3.2);
\coordinate (p68) at (10.1,4);%
\path[fill, opacity=0.2] (p61) to (p62) to (p63) to (p64) to (p65) to (p66) to (p67) to (p68) to cycle;
\draw[dashed] (p62) -- (p61);
\draw (p62) -- (p63);
\draw[dashed] (p63) -- (p64);
\draw[dashed] (p66) -- (p65);
\draw (p66) -- (p67);
\draw[dashed] (p67) -- (p68);
\node (U6a) at (9.5,3.5) {$U_6$};
\node (U6b) at (9.5,-0.5) {$U_6$};
\node (U16) at (9.5,1.5) {\scriptsize{$U_1\cap U_6$}};

\coordinate (p71) at (13,2);
\coordinate (p72) at (12.2,2);
\coordinate (p73) at (10.8,2);
\coordinate (p74) at (10.8,1);
\coordinate (p75) at (12.2,1);
\coordinate (p76) at (13,1);%
\path[fill, opacity=0.2] (p71) to (p72) to (p73) to (p74) to (p75) to (p76) to cycle;
\draw[dashed] (p72) -- (p71);
\draw (p72) -- (p73) -- (p74) -- (p75);
\draw[dashed] (p75) -- (p76);
\node (U7) at (12.5,1.5) {$U_7$};
\node (U17) at (11.4,1.5) {\scriptsize{$U_1\cap U_7$}};

\end{tikzpicture}
\scriptsize\smallskip\\
Figure \arabic{figure_num}: an example of an $\varepsilon$-neighborhood of $U_1\in\calU$, with labeled regions.
\medskip
\end{center}
Let $\simp H_0=\simp H_0\Par{\calU}$ be the adjacency graph of all regions whose color is not $\emp$. For every $v\in V$, let $\simp H_{0,v}=\simp H_{0,v}\Par{\calU}$ be the graph induced by $\simp H_0$ on all regions $R$ such that $v\in c(R)$ (see Figure \ref{initial_cover_graph}).  
We now claim:
\begin{enumerate}
\item For every $e=\set{v,w}\in E$ there is a unique region $R_e\in V(\simp H_0)$ colored by $e$, this region is only adjacent to regions whose color is $\set{v}$ or $\set{w}$, and the number of $\set{v}$-colored regions adjacent to $R_e$ is uniquely determined by $U_v$ and $U_w$.
\item For every $v\in V$, the graph $\simp H_{0,v}$ is a tree.
\end{enumerate}

\begin{center}
\refstepcounter{figure_num}\label{initial_cover_graph}
\scriptsize
\begin{tikzpicture}[scale=0.8]

\coordinate (p11) at (0,0);
\coordinate (p12) at (12,0);
\coordinate (p13) at (12,3);
\coordinate (p14) at (0,3);
\coordinate (U1a) at (0.875,0.5);
\filldraw[violet] (U1a) circle (2pt) node[black, below left] {1};
\coordinate (U1b) at (4.5,1.5);
\filldraw[violet] (U1b) circle (2pt) node[black, below right] {1};
\coordinate (U1c) at (8.125,0.5);
\filldraw[violet] (U1c) circle (2pt) node[black, below right] {1};
\coordinate (U1d) at (11,2.5);
\filldraw[violet] (U1d) circle (2pt) node[black, above] {1};
\path[fill, opacity=0.1] (p11) to (p12) to (p13) to (p14) to cycle;

\coordinate (p21) at (1,4);
\coordinate (p22) at (2,-1);
\coordinate (p23) at (7,-1);
\coordinate (p24) at (8,4);
\coordinate (p25) at (7,4);
\coordinate (p26) at (6,1);
\coordinate (p27) at (3,1);
\coordinate (p28) at (2,4);
\coordinate (U2a) at (1.65,3.5);
\filldraw (U2a) circle (2pt) node[above right] {2};
\coordinate (U2b) at (4.5,-0.5);
\filldraw (U2b) circle (2pt) node[right] {2};
\coordinate (U2c) at (7.35,3.5);
\filldraw (U2c) circle (2pt) node[above left] {2};
\coordinate (U12) at (4.5,0.5);
\filldraw[violet] (U12) circle (2pt) node[black, below right] {12};
\path[fill, opacity=0.1] (p21) to (p22) to (p23) to (p24) to (p25) to (p26) to (p27) to (p28) to cycle;

\coordinate (p31) at (-1,2);
\coordinate (p32) at (1.2,2);
\coordinate (p33) at (1.2,1);
\coordinate (p34) at (-1,1);
\coordinate (U3) at (-0.5,1.5);
\filldraw (U3) circle (2pt) node[below] {3};
\coordinate (U13) at (0.6,1.5);
\filldraw[violet] (U13) circle (2pt) node[black, above right] {13};
\path[fill, opacity=0.1] (p31) to (p32) to (p33) to (p34) to cycle;

\coordinate (p41) at (3,4);
\coordinate (p42) at (3,2);
\coordinate (p43) at (4.2,2);
\coordinate (p44) at (4.2,4);
\coordinate (U4) at (3.6,3.5);
\filldraw (U4) circle (2pt) node[left] {4};
\coordinate (U14) at (3.6,2.5);
\filldraw[violet] (U14) circle (2pt) node[black, left] {14};
\path[fill, opacity=0.1] (p41) to (p42) to (p43) to (p44) to cycle;

\coordinate (p51) at (4.8,4);
\coordinate (p52) at (4.8,2);
\coordinate (p53) at (6,2);
\coordinate (p54) at (6,4);
\coordinate (U5) at (5.4,3.5);
\filldraw (U5) circle (2pt) node[right] {5};
\coordinate (U15) at (5.4,2.5);
\filldraw[violet] (U15) circle (2pt) node[black, right] {15};
\path[fill, opacity=0.1] (p51) to (p52) to (p53) to (p54) to cycle;

\coordinate (p61) at (8.9,4);
\coordinate (p62) at (8.9,-1);
\coordinate (p63) at (10.1,-1);
\coordinate (p64) at (10.1,4);
\coordinate (U6a) at (9.5,3.5);
\filldraw (U6a) circle (2pt) node[left] {6};
\coordinate (U6b) at (9.5,-0.5);
\filldraw (U6b) circle (2pt) node[right] {6};
\coordinate (U16) at (9.5,1.5);
\filldraw[violet] (U16) circle (2pt) node[black, above left] {16};
\path[fill, opacity=0.1] (p61) to (p62) to (p63) to (p64) to cycle;

\coordinate (p71) at (13,2);
\coordinate (p72) at (10.8,2);
\coordinate (p73) at (10.8,1);
\coordinate (p74) at (13,1);
\coordinate (U7) at (12.5,1.5);
\filldraw (U7) circle (2pt) node[below] {7};
\coordinate (U17) at (11.4,1.5);
\filldraw[violet] (U17) circle (2pt) node[black, below left] {17};
\path[fill, opacity=0.1] (p71) to (p72) to (p73) to (p74) to cycle;

\draw (U3) -- (U13);
\draw[very thick, violet] (U13) -- (U1a) -- (U12) -- (U1c) -- (U16) -- (U1d) -- (U17);
\draw (U17) -- (U7);
\draw (U2b) -- (U12);
\draw[very thick, violet] (U12) -- (U1b) -- (U14);
\draw (U14) -- (U4);
\draw[very thick, violet] (U1b) -- (U15);
\draw (U15) -- (U5);
\draw (U6b) -- (U16) -- (U6a);
\draw (U2a) .. controls (2,0.8) .. (U12) .. controls (7,0.8) .. (U2c);
\draw[dashed] (U3) -- (-1,1.5);
\draw[dashed] (U2a) -- (1.5,4);
\draw[dashed] (U4) -- (3.6,4);
\draw[dashed] (U2b) -- (4.5,-1);
\draw[dashed] (U5) -- (5.4,4);
\draw[dashed] (U2c) -- (7.5,4);
\draw[dashed] (U6a) -- (9.5,4);
\draw[dashed] (U6b) -- (9.5,-1);
\draw[dashed] (U7) -- (13,1.5);

\end{tikzpicture}
\scriptsize\smallskip\\
Figure \arabic{figure_num}: part of the graph $\simp H_0$ obtained from Figure \ref{good_cover_example}. The subgraph $\simp H_{0,1}$ is emphasized in violet.
\medskip
\end{center}

(1) is straightforward. 
To prove (2) note that $\simp H_{0,v}$ is connected: suppose that $R,R'\su U_v$ are two regions. The set $U_v$ is contractible and therefore path-connected, so we may consider a path $[0,1]\to U_v$ which connects some point in $R$ to a point in $R'$. This path must pass through a finite sequence of adjacent regions $R=R_1,\ldots,R_n=R'$, such that $v\in C(R_i)$ for all $i$.\\
It is now sufficient to show that $|E\Par{\simp H_{0,v}}|=|V\Par{\simp H_{0,v}}|-1$. If $\deg_{\simp G}\Par{v}=0$ then $\simp H_{0,v}$ is a single point. So assume there exists $u\in V$ such that $e=\set{v,u}\in E$. Let $k$ be the number of $\set{v}$-colored regions adjacent to $R_e$, and let $\simp H_{0,v}^-=\simp H_{0,v}\Par{\calU-\set{U_u}}$, noting that $\calU-\set{U_u}$ is also a good cover. Our claim follows by induction on $\deg_{\simp G}(v)$ if we prove that $\abs{V\Par{\simp H_{0,v}}}=\abs{V\Par{\simp H_{0,v}^-}}+k$ and $\abs{E\Par{\simp H_{0,v}}}=\abs{E\Par{\simp H_{0,v}^-}}+k$. Indeed,
$$\set{R_e}\cup\set{R: \text{$R$ is a $\set{v}$-colored region adjacent to $R_e$}}$$
is a collection of $k+1$ regions with $k$ adjacencies among them in $\simp H_{0,v}$, which unite to become a single region in $\simp H_{0,v}^-$. It follows from (1) that no other regions or adjacencies in $\simp H_{0,v}$ are affected by removing $U_u$.

We can now conclude our proof: for every region $R\in V\Par{\simp H_0}$ colored by $\set{v}$ for some $v\in V$, choose an arbitrary adjacent $R'$ (colored by some $e=\set{v,w}$), and contract $\set{R,R'}$, identifying the new region as $R'$. Let $\simp H$ be the graph obtained by preforming such contractions until the only remaining regions are colored by $E$ (see Figure \ref{final_cover_graph}).\\

\begin{center}
\refstepcounter{figure_num}\label{final_cover_graph}
\scriptsize
\begin{tikzpicture}[scale=0.8]

\coordinate (p11) at (0,0);
\coordinate (p12) at (12,0);
\coordinate (p13) at (12,3);
\coordinate (p14) at (0,3);
\coordinate (U1a) at (0.875,0.5);
\coordinate (U1b) at (4.5,1.5);
\coordinate (U1c) at (8.125,0.5);
\coordinate (U1d) at (11,2.5);

\coordinate (p21) at (1,4);
\coordinate (p22) at (2,-1);
\coordinate (p23) at (7,-1);
\coordinate (p24) at (8,4);
\coordinate (p25) at (7,4);
\coordinate (p26) at (6,1);
\coordinate (p27) at (3,1);
\coordinate (p28) at (2,4);
\coordinate (U2a) at (1.65,3.5);
\coordinate (U2b) at (4.5,-0.5);
\coordinate (U2c) at (7.35,3.5);
\coordinate (U12) at (4.5,0.5);
\filldraw (U12) circle (2pt) node[below right] {12};

\coordinate (p31) at (-1,2);
\coordinate (p32) at (1.2,2);
\coordinate (p33) at (1.2,1);
\coordinate (p34) at (-1,1);
\coordinate (U3) at (-0.5,1.5);
\coordinate (U13) at (0.6,1.5);
\filldraw (U13) circle (2pt) node[above right] {13};

\coordinate (p41) at (3,4);
\coordinate (p42) at (3,2);
\coordinate (p43) at (4.2,2);
\coordinate (p44) at (4.2,4);
\coordinate (U4) at (3.6,3.5);
\coordinate (U14) at (3.6,2.5);
\filldraw (U14) circle (2pt) node[left] {14};

\coordinate (p51) at (4.8,4);
\coordinate (p52) at (4.8,2);
\coordinate (p53) at (6,2);
\coordinate (p54) at (6,4);
\coordinate (U5) at (5.4,3.5);
\coordinate (U15) at (5.4,2.5);
\filldraw (U15) circle (2pt) node[right] {15};

\coordinate (p61) at (8.9,4);
\coordinate (p62) at (8.9,-1);
\coordinate (p63) at (10.1,-1);
\coordinate (p64) at (10.1,4);
\coordinate (U6a) at (9.5,3.5);
\coordinate (U6b) at (9.5,-0.5);
\coordinate (U16) at (9.5,1.5);
\filldraw (U16) circle (2pt) node[above left] {16};

\coordinate (p71) at (13,2);
\coordinate (p72) at (10.8,2);
\coordinate (p73) at (10.8,1);
\coordinate (p74) at (13,1);
\coordinate (U7) at (12.5,1.5);
\coordinate (U17) at (11.4,1.5);
\filldraw (U17) circle (2pt) node[below left] {17};

\draw (U3) -- (U13);
\draw[very thick, violet] (U13) .. controls (U1a) .. (U12) .. controls (U1c) .. (U16) .. controls (U1d) .. (U17);
\draw (U17) -- (U7);
\draw (U2b) -- (U12);
\draw[very thick, violet] (U12) .. controls (U1b) .. (U14);
\draw (U14) -- (U4);
\draw[very thick, violet] (U14) .. controls (4.5, 1.7) .. (U15);
\draw (U15) -- (U5);
\draw (U6b) -- (U16) -- (U6a);
\draw (U2a) .. controls (2,0.8) .. (U12) .. controls (7,0.8) .. (U2c);
\draw[dashed] (U3) -- (-1,1.5);
\draw[dashed] (U2a) -- (1.5,4);
\draw[dashed] (U4) -- (3.6,4);
\draw[dashed] (U2b) -- (4.5,-1);
\draw[dashed] (U5) -- (5.4,4);
\draw[dashed] (U2c) -- (7.5,4);
\draw[dashed] (U6a) -- (9.5,4);
\draw[dashed] (U6b) -- (9.5,-1);
\draw[dashed] (U7) -- (13,1.5);

\end{tikzpicture}
\scriptsize\smallskip\\
Figure \arabic{figure_num}: part of the graph $\simp H$ obtained from $\simp H_0$ given in Figure \ref{initial_cover_graph}. The subgraph $\simp T_1$ is emphasized in violet.
\medskip
\end{center}

Then $\simp H$ is a planar model for $\simp G$: we know that $\simp H$ is planar as it was obtained from $\simp H_0$ by contracting edges. Every $e\in E$ corresponds to a unique vertex in $\simp H$, and for every $v\in V$ the graph $\simp T_v$ was obtained from the tree $\simp H_{0,v}$ by contracting edges, therefore $\simp T_v$ is also a tree.
\end{proof}

\subsection{Proving and disproving $d$-representability}
As Theorem \ref{faithful_lemma} implies, there is a close connection between $d$-representability of $\simp K$ and our ability to linearly embed 
(or ``approximately'' embed) $\simp K'$ in $\Rd$. Indeed, we have

\begin{thm} \label{codim_rep}
Let $\simp K$ be a simplicial complex, such that there exists a continuous injective map $f:\geo{\simp K'}\to\Rd$. Then $\simp K$ is the nerve of a good cover in $\Rd$. If $f$ is also linear, $\simp K$ is $d$-representable. In particular, every $\simp K$ is $\Par{2\dim\simp K'+1}$-representable.
\end{thm}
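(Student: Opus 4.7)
The plan is to use the map $f$ to exhibit a family of subsets of $\Rd$ whose nerve is $\simp K$ — convex sets in the linear case, and a good cover in general. For each vertex $i\in V(\simp K)$ set $C_i := f(\geo{\cont_i})\su\Rd$. I would first verify that $\{C_i\}$ itself has nerve $\simp K$. Within $\geo{\simp K'}$, intersections of closed simplices are spanned by the intersection of their vertex sets, so $\bigcap_{i\in I}\geo{\cont_i} = \geo{\bigcap_{i\in I}\cont_i} = \geo{\cont_I}$. Moreover $\cont_I\neq\emp$ iff $I\in\simp K$: if $I\in\simp K$ then $I$ lies in some facet $J\in\cont_I$ and in fact $I\su\cap\cont_I$, while conversely any facet in a non-empty $\cont_I$ contains $I$. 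Injectivity of $f$ then yields $\bigcap_{i\in I}C_i = f(\geo{\cont_I})$, which is non-empty precisely when $I\in\simp K$.

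The linear half is now immediate: if $f$ is linear, each $C_i$ is the image of a simplex under a linear map, hence convex, so $\{C_i\}$ is a family of convex sets in $\Rd$ with nerve $\simp K$. For the good cover statement with general continuous $f$, I would thicken: take $U_i$ to be a small open neighborhood of $C_i$. Since the $C_i$ are compact and $\bigcap_{i\in I}C_i=\emp$ whenever $I\notin\simp K$, for sufficiently small thickness the nerve of $\{U_i\}$ agrees with that of $\{C_i\}$. The main obstacle is to guarantee that each non-empty $\bigcap_{i\in I}U_i$ is actually homeomorphic to an open $d$-ball, rather than merely contractible, since a continuous injection into $\Rd$ can in principle be wild. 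I would handle this by first replacing $f$ by a PL injective approximation on a sufficiently fine subdivision of $\geo{\simp K'}$ (so that each $f(\geo{\cont_I})$ becomes a PL-embedded simplex) and then appealing to PL regular neighborhood theory: small open regular neighborhoods of the $f(\geo{\cont_i})$ are PL $d$-balls, their intersections are regular neighborhoods of the smaller PL simplices $f(\geo{\cont_I})$, and these too are PL $d$-balls.

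Finally, the ``in particular'' clause follows by combining the linear case with the classical fact that any finite simplicial complex of dimension $n$ admits a linear injective map into $\bbR^{2n+1}$ (for instance, by placing its vertices in general position on the moment curve and extending linearly over faces, so that any two faces of total dimension $\leq 2n$ meet only in a common subface). Applied to $\simp K'$ with $n=\dim\simp K'$, this produces a linear injective $\geo{\simp K'}\to\bbR^{2\dim\simp K'+1}$, and the linear half of the theorem delivers $d$-representability for $d=2\dim\simp K'+1$.
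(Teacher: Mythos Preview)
Your argument is essentially the same as the paper's. The paper observes that injectivity of $f$ gives $\bigcap_i f(\geo{\alpha_i})=f\bigl(\bigcap_i\geo{\alpha_i}\bigr)$, concludes that $f$ is faithful, and then invokes Theorem~\ref{faithful_lemma} for the linear case; you unfold this by directly checking that $\bigcap_{i\in I}C_i=f(\geo{\cont_I})$ and that this is non-empty exactly when $I\in\simp K$. The ``in particular'' clause is handled identically via general position on the moment curve.

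The one place where you are noticeably more careful than the paper is the good-cover step. The paper simply takes $\varepsilon$-neighborhoods of the images $f(\geo{\cont_i})$ and asserts this yields a good cover, without addressing why each non-empty intersection of these neighborhoods is genuinely homeomorphic to an open $d$-ball (as the paper's own definition of good cover requires). You correctly flag that a merely continuous injection could be wild, so small neighborhoods need not be balls, and you propose passing to a PL approximation and using regular-neighborhood theory. That is a legitimate and cleaner way to close the gap; the paper's treatment of this point is brisk by comparison.
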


\begin{proof}
If $f$ is injective, for every collection $\alpha_1,\ldots,\alpha_n\in\simp K'$ we have
$$\bigcap_{i=1}^n f\Par{\geo{\alpha_i}}=f\Par{\bigcap_{i=1}^n \geo{\alpha_i}}.$$
This clearly implies that $f$ is faithful, so the linear case follows from Theorem \ref{faithful_lemma}. Furthermore, every intersection among the sets $\set{f\Par{\geo{\cont_i}}}_{i\in V(\simp K)}$ is either empty or homeomorphic to a closed ball of dimension $\leq d$. For some sufficiently small $\varepsilon>0$, we may replace the sets $\set{f\Par{\geo{\cont_i}}}_{i\in V(\simp K)}$ with an $\varepsilon$-neighborhood of them without introducing new intersections, thus obtaining our required good cover.\\
We conclude by proving that whenever $\simp L$ is a simplicial complex and $d=2\dim\simp L+1$, there exists a linear injective map $f:\geo{\simp L}\to\Rd$. Indeed, every linear $f$ taking the vertices of $\simp L$ to points in general position will suffice (for instance, we may map the vertices of $\simp L$ to distinct points on the moment curve, $\set{\Par{t,t^2,\ldots,t^d}\ :\ t\in\bbR}$), see \cite[Theorem 1.6.1]{Ma}.
\end{proof}

\comment{
\begin{rmk}
If the continuous map $\geo{\simp K'}\to\Rd$ is injective but not linear, $\simp K$ is the nerve of a good cover in $\Rd$.
\end{rmk}}

Of course the converse doesn't hold - some faithful maps are not injective (see Figure \ref{simplicial_example}). It is therefore insufficient (in the general case) to prove that $\geo{\simp K'}$ doesn't embed in $\Rd$, if our goal is to prove that $\simp K$ is not $d$-representable. We can however prove:


\comment{
\begin{thm}
Let $\simp K,\simp L$ be simplicial complexes such that $\simp K'\cong\simp L$, and the under the same isomorphism $\hat{\simp K}\cong\scdp{\simp L}$. Suppose that $\simp L$ has the property: for every continuous $\geo{\simp L}\to\Rd$, there exist in $\simp L$ two disjoint faces whose images intersect. Then $\simp K$ is not $d$-representable.
\end{thm} }

\begin{thm} \label{non_rep}
Let $\simp L$ be a simplicial complex with the property: for every continuous ${\geo{\simp L}\to\Rd}$ there exist in $\simp L$ two disjoint faces whose images in $\Rd$ intersect.\\
Let $\simp K$ be a simplicial complex, and suppose that there exists a continuous ${\phi:\geo{\simp L}\to\geo{\simp K'}}$, which extends to $\scdp{\simp L}\to\hat{\simp K}$ (by applying $\phi$ on each coordinate). Then $\simp K$ is not $d$-representable (and in fact not even the nerve of a good cover in $\Rd$).
\end{thm}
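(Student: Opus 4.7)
The plan is to argue by contradiction, assuming that $\simp K$ is the nerve of a good cover in $\Rd$ (the stronger conclusion), and deriving that the postulated non-embeddability of $\simp L$ fails. The strategy is to transport the hypothesis ``$\simp L$ does not continuously embed in $\Rd$'' through $\phi$ into the faithful world of $\simp K'$, where Lemma \ref{configuration_lemma} forces a contradiction.

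First, suppose $\simp K$ is the nerve of a good cover in $\Rd$. By Lemma \ref{good_cover_faithful}, there exists a faithful map ${f : \geo{\simp K'}\to\Rd}$. Consider the composition
$$g = f \circ \phi : \geo{\simp L} \to \Rd.$$
This is a continuous map from $\geo{\simp L}$ to $\Rd$, so by the defining property of $\simp L$, we can find two disjoint faces $F, G \in \simp L$ with $g(\geo F) \cap g(\geo G) \neq \emp$. Pick $x \in \geo F$ and $y \in \geo G$ with $g(x) = g(y)$, i.e. $f(\phi(x)) = f(\phi(y))$.

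Since $F \cap G = \emp$ and $\supp(x) \su F$, $\supp(y) \su G$, we have $\supp(x) \cap \supp(y) = \emp$, so $(x,y) \in \scdp{\simp L}$. By the hypothesis that $\phi$ extends coordinatewise to a map $\scdp{\simp L} \to \hat{\simp K}$, we obtain $(\phi(x), \phi(y)) \in \hat{\simp K}$. But now $f$ identifies $\phi(x)$ with $\phi(y)$, so Lemma \ref{configuration_lemma} implies that $f$ is \emph{not} faithful, contradicting our starting point.

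The proof is essentially a direct composition argument, so the only real point to watch is the bookkeeping at step four: we must check that the disjointness of the faces $F, G$ in $\simp L$ translates to disjoint supports of $x, y$, so that the pair lies in $\scdp{\simp L}$ and the hypothesis on $\phi$ kicks in. That is why the statement is formulated using disjoint faces of $\simp L$ rather than merely a non-injectivity statement: it is precisely the disjointness that lets us transfer the pair into $\hat{\simp K}$ via the extension property of $\phi$.
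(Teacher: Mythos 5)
Your argument is essentially the paper's proof: obtain a faithful map $f:\geo{\simp K'}\to\Rd$, apply the non-embeddability hypothesis of $\simp L$ to the composition $f\circ\phi$, push the resulting pair with disjoint supports through the coordinatewise extension of $\phi$ into $\hat{\simp K}$, and contradict Lemma \ref{configuration_lemma}. The bookkeeping you flag (disjoint faces give $\supp(x)\cap\supp(y)=\emp$, hence a point of $\scdp{\simp L}$) is exactly the step the paper takes as well.

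One caveat: you dispose of the $d$-representable case by declaring ``nerve of a good cover'' to be the stronger conclusion, i.e.\ you implicitly claim that every $d$-representable complex is the nerve of a good cover. That implication is true, but it is nowhere established in the paper and is not immediate: the representing convex sets need not be open nor homeomorphic to open $d$-balls (they can be closed, lower-dimensional, or unbounded), so justifying it requires a reduction to compact convex sets followed by an $\varepsilon$-thickening argument of the kind used in the proof of Theorem \ref{codim_rep}. The paper sidesteps this entirely by treating the two cases uniformly: if $\simp K$ is $d$-representable, Theorem \ref{faithful_lemma} directly supplies a (linear) faithful map, and if $\simp K$ is the nerve of a good cover, Lemma \ref{good_cover_faithful} supplies one; everything after that in your argument runs verbatim. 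Replacing your parenthetical with that one-line appeal to Theorem \ref{faithful_lemma} closes the gap.
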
 

\begin{proof}

Assume by contradiction that $\simp K$ is $d$-representable or the nerve of a good cover in $\Rd$. In particular there exists a faithful $f:\geo{\simp K'}\to\Rd$ 
(by Theorem \ref{faithful_lemma} or Lemma \ref{good_cover_faithful}).
Of course $f\circ\phi:\geo{\simp L}\to\Rd$ is continuous, so there exist $a,b\in\geo{\simp L}$ supported by disjoint faces, such that $f(\phi(a))=f(\phi(b))$. But now $(a,b)\in\scdp{\simp L}$, implying $\Par{\phi(a),\phi(b)}\in\hat{\simp K}$ and contradicting Lemma \ref{configuration_lemma}.
\end{proof}

\begin{rmk}
Theorem \ref{non_rep} is applicable in a wide variety of cases. In some cases, $\hat{\simp K}$ is precisely the deleted product of $\simp L=\simp K'$, so $\phi$ is just the identity map (for example when $\simp K'$ is a graph, see Theorem \ref{codim_1}). In Lemma \ref{dual_classifying} we prove that every $\simp L$ has some $\simp K$ for which the theorem applies, so every such non-embeddability result translates to a non-representability result for some $\simp K$.\\
In another example due to Tancer \cite{TaRep}, $\simp L$ is the $d$-skeleton of the $\Par{2d+2}$-dimensional simplex, and $\simp K$ is the barycentric subdivision of $\simp L$. The Van Kampen-Flores Theorem 
states that any continuous $\geo{\simp L}\to\bbR^{2d}$ has some two disjoint $\simp L$-faces whose images intersect. Tancer's proof that $\simp K$ is not $2d$-representable is essentially equivalent to an application of Theorem \ref{non_rep} with an implicit construction of $\phi:\geo{\simp L}\to\geo{\simp K'}$.
\end{rmk}

We now want to show that the value $\Par{2\dim\simp K'+1}$ in Theorem \ref{codim_rep} is the best possible: there exist complexes $\simp K$ such that $\simp K'$ is $d$-dimensional, but $\simp K$ is not $2d$-representable. We know (from Van Kampen-Flores) of a suitable $\simp L$, so to apply Theorem \ref{non_rep} we prove:


\begin{lem} \label{dual_classifying}
Let $\simp L$ be a simplicial complex. Then there exists a simplicial complex $\simp K$ and an isomorphism $\phi:\simp L\to\simp K'$, which extends (by applying $\phi$ on each coordinate) to an equivariant homeomorphism $\scdp{\simp L}\to\hat{\simp K}$.
\end{lem}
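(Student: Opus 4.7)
My plan is to reverse-engineer $\simp K$ from $\simp L$ so that its facets are indexed by vertices of $\simp L$ and their nerve reproduces $\simp L$. Explicitly, set $V(\simp K)=\simp L\setminus\set{\emp}$, and for every vertex $v\in V(\simp L)$ let
$$J_v=\set{F\in\simp L\setminus\set{\emp}\ :\ v\in F}\su V(\simp K).$$
Declare $\simp K$ to be the complex whose facets are $\set{J_v}_{v\in V(\simp L)}$, i.e.\ $\simp K=\bigcup_v 2^{J_v}$. Since $\set{v}\in J_v$ but $\set{v}\notin J_w$ for $w\neq v$, the $J_v$ are pairwise incomparable, so they really are the inclusion-maximal faces, and $\m{\simp K}$ is in canonical bijection with $V(\simp L)$ via $v\leftrightarrow J_v$.

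Next I would verify that $\phi:\simp L\to\simp K'$, $v\mapsto J_v$, is a simplicial isomorphism. For any $\set{v_1,\ldots,v_k}\su V(\simp L)$,
$$\bigcap_{i=1}^k J_{v_i}=\set{H\in V(\simp K)\ :\ \set{v_1,\ldots,v_k}\su H},$$
which is non-empty exactly when $\set{v_1,\ldots,v_k}$ is itself a face of $\simp L$ (in that case, it is the minimal such $H$). Thus $\set{J_{v_1},\ldots,J_{v_k}}\in\simp K'$ iff $\set{v_1,\ldots,v_k}\in\simp L$, so $\phi$ is an isomorphism. Its linear extension $\phi:\geo{\simp L}\to\geo{\simp K'}$ sends a point supported on $F\in\simp L$ to a point supported on $\set{J_v : v\in F}$.

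Finally, I would check that applying $\phi$ coordinatewise carries $\scdp{\simp L}$ into $\hat{\simp K}$. Take $(x,y)\in\scdp{\simp L}$ and let $F=\supp(x)$, $G=\supp(y)$, so $F\cap G=\emp$. Then $\bigcap\supp(\phi(x))=\bigcap_{v\in F}J_v$ consists of all non-empty faces of $\simp L$ containing $F$; in particular it contains $F$ itself, and symmetrically $G\in\bigcap\supp(\phi(y))$. For $\Par{\bigcap\supp(\phi(x))}\cup\Par{\bigcap\supp(\phi(y))}$ to lie in $\simp K$, it would have to be contained in some facet $J_v$, but $F$ being in the union forces $v\in F$ and $G$ being in the union forces $v\in G$, contradicting $F\cap G=\emp$. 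Hence $(\phi(x),\phi(y))\in\hat{\simp K}$, and equivariance under the swap involution is automatic.

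The main obstacle I anticipate is purely bookkeeping: one must carefully distinguish between faces of $\simp L$, vertices of $\simp K$ (which are themselves faces of $\simp L$), facets of $\simp K$, and faces of $\simp K'$, and correctly track how the support of a point transforms under $\phi$. Once the construction is set up with these distinctions in mind, all three verifications reduce to short definition-chasing.
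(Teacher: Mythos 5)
Your proposal is correct and is essentially the paper's own argument: your $\simp K=\bigcup_{v}2^{J_v}$ is precisely the nerve $\set{\al\su\simp L\ :\ \bigcap\al\neq\emp}$ of $\simp L$ used in the paper (with $J_v=\cont_v$), and your verifications of the isomorphism $v\mapsto J_v$ and of the support condition are the same definition-chasing. The only, harmless, difference is that you prove just the implication needed for the lemma (disjoint supports force $\Par{\bigcap\supp(\phi(x))}\cup\Par{\bigcap\supp(\phi(y))}\notin\simp K$, via containment of a face in some facet $J_v$), whereas the paper records the full equivalence $F\cap G\neq\emp\iff\Par{\bigcap\calC_F}\cup\Par{\bigcap\calC_G}\in\simp K$, which it reuses later to identify $\hat{\simp K}$ with $\scdp{\Par{\simp K'}}$.
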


\begin{proof}
Assume that $V(\simp L)=\int n$. Let $\simp K$ be the nerve of $\simp L$. Explicitly,
$$\simp K=\set{\al\su\simp L\ :\ \bigcap\al\neq\emp}.$$
For every $i\in\int n$ denote
$$\cont_i=\set{F\in\simp L\ :\ i\in F},$$
and for every $I\su\int n$ denote
$$\calC_I=\set{\cont_i\ :\ i\in I}.$$
Clearly $\m{\simp K}=\set{\cont_1,\ldots,\cont_n}$. Our isomorphism is obtained by applying $i\mapsto\cont_i$ to map the vertices. To prove that this induces an isomorphism $\simp L\to\simp K'$, we must show that every $I\su\int n$ has $I\in \simp L$ if and only if $\calC_I\in \simp K'$.

Indeed, if $I\in\simp L$ then $I\in\cont_i$ for every $i\in I$, thus $\bigcap_{i\in I}\cont_i\neq\emp$ proving $\calC_I\in\simp K'$. On the other hand, if $\calC_I\in\simp K'$ then $\cap\calC_I\neq\emp$, so let $F\in\bigcap_{i\in I}\cont_i$. In particular $F\in\simp L$. Furthermore, $F\in\cont_i$ implies $i\in F$, and this holds for every $i\in I$, proving $I\su F$ and therefore $I\in\simp L$.

We prove the remaining claim on the configuration spaces by showing that for every $F,G\in\simp L$, we have
$$F\cap G\neq\emp \quad\iff\quad \Par{\bigcap \calC_F}\cup\Par{\bigcap \calC_G}\in\simp K\ .$$
Suppose that $x\in F\cap G$. Then $\bigcap\calC_F=\bigcap_{i\in F}\cont_i\su\cont_x$, and similarly $\bigcap\calC_G\su\cont_x$, thus $\Par{\bigcap \calC_F}\cup\Par{\bigcap \calC_G}\su\cont_x\in\simp K$. On the other hand, suppose that $\al:=\Par{\bigcap \calC_F}\cup\Par{\bigcap \calC_G}\in\simp K$. For every $i\in F$ we have $F\in\cont_i$, thus $F\in\Par{\bigcap\calC_F}$ and $F\in\al$. Similarly $G\in\al$. Note that $\al\in\simp K$ implies $\bigcap\al\neq\emp$, and we deduce $\emp\neq\bigcap\al\su F\cap G$.
\end{proof}

\begin{cor}
For every $d\in\bbN$, there exist simplicial complexes $\simp K$ such that $\dim\simp K'=d$, and $\simp K$ is not $2d$-representable.
\end{cor}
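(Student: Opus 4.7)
The plan is to assemble the three ingredients already laid out: the Van Kampen--Flores theorem, Lemma \ref{dual_classifying}, and Theorem \ref{non_rep}. Fix $d\in\bbN$ and let $\simp L$ denote the $d$-skeleton of the $(2d+2)$-dimensional simplex $\Delta_{2d+2}$. By the Van Kampen--Flores theorem (cited in the remark following Theorem \ref{non_rep}), every continuous map $\geo{\simp L}\to\bbR^{2d}$ has two disjoint faces of $\simp L$ whose images meet, so $\simp L$ satisfies the hypothesis required by Theorem \ref{non_rep} in dimension $2d$.

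Next, I would apply Lemma \ref{dual_classifying} to this $\simp L$ to produce a simplicial complex $\simp K$ together with an isomorphism $\phi\colon\simp L\to\simp K'$ which extends equivariantly to $\scdp{\simp L}\to\hat{\simp K}$. In particular $\simp K'\cong\simp L$ as simplicial complexes, so $\dim\simp K'=\dim\simp L=d$, giving the dimension condition. The induced geometric map $\phi\colon\geo{\simp L}\to\geo{\simp K'}$ is precisely the kind of map $\phi$ needed in the hypothesis of Theorem \ref{non_rep}.

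Applying Theorem \ref{non_rep} with this choice of $\simp L$, $\simp K$, and $\phi$ then yields that $\simp K$ is not $2d$-representable (indeed, not even the nerve of a good cover in $\bbR^{2d}$), completing the proof.

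No step looks truly hard: everything has been set up. The only thing worth double-checking is the compatibility between the isomorphism $\phi\colon\simp L\to\simp K'$ and the deleted-product structure, i.e.\ that applying $\phi$ coordinate-wise really does map $\scdp{\simp L}$ into $\hat{\simp K}$, but this is exactly what Lemma \ref{dual_classifying} delivers. Thus the main obstacle was entirely absorbed into that lemma, and the corollary is essentially a packaging step.
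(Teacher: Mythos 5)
Your proposal is correct and follows exactly the paper's own argument: take $\simp L=\Delta_{2d+2}^{(d)}$, invoke the Van Kampen--Flores theorem, obtain $\simp K$ with $\simp K'\cong\simp L$ via Lemma \ref{dual_classifying}, and conclude via Theorem \ref{non_rep}. Your explicit note that $\dim\simp K'=\dim\simp L=d$ is a small but welcome addition that the paper leaves implicit.
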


\begin{proof}
Let $\simp L$ be the $d$-skeleton of the $\Par{2d+2}$-dimensional simplex, and apply Lemma \ref{dual_classifying} on $\simp L$ to obtain a simplicial complex $\simp K$ and corresponding $\phi:\geo{\simp L}\to\geo{\simp K'}$. The Van Kampen-Flores Theorem (see \cite[Theorem 5.1.1]{Ma}) states that any continuous $\geo{\simp L}\to\bbR^{2d}$ has some two disjoint $\simp L$-faces whose images intersect. We deduce from Theorem \ref{non_rep} that $\simp K$ is not $2d$-representable.
\end{proof}

\section{Results on $d$-Matou\v sek complexes}  

We now turn our attention to the $d$-Matou\v sek property, and begin to explore some cases where it fully captures $d$-representability, and others where it fails to do so.

In the classification of $1$-representable complexes, Lekkerkerker and Boland \cite{LB} introduced the notion of \emph{asteroidal triples} in graphs, and proved that $1$-skeletons of a $1$-representable complexes are precisely graphs which have no asteroidal triple and no induced cycle of length $\geq 4$. Before proving that every $1$-representable complex is $1$-Matou\v sek, we extend the notion of an asteroidal triple.

\subsection{Asteroidal maps}

Recall that $\Delta_{d+1}^{(d)}$ denotes the $d$-dimensional skeleton of a $(d+1)$-dimensional simplex.
\begin{df}
Let $\simp K$ be a simplicial complex, and let $d\in\bbN$. A \emph{$d$-asteroidal} map into $\simp K$ is a continuous $f:\geo{\Delta_{d+1}^{(d)}}\to\geo{\simp K}$, such that for every $x,y\in\geo{\Delta_{d+1}^{(d)}}$ with $\supp(x)\cap\supp(y)=\emp$, we have 
$$\supp\Par{f(x)}\cup\supp\Par{f(y)}\notin\simp K\ .$$
\end{df}

To see how this generalizes the notion of asteroidal triples (when $d=1$), note that any $3$ vertices form an asteroidal triple (as defined in \cite{LB}) if they are the image of the $3$ vertices of $\Delta_{2}^{(1)}$ under a $1$-asteroidal map. By allowing the vertices of $\Delta_{2}^{(1)}$ to be mapped to general points in $\geo{\simp K}$ (not necessarily vertices), we find that a $1$-asteroidal map into $\simp K$ exists whenever $\simp K$ is not $1$-representable.

Additionally, we obtain a $d$-dimensional analogue of asteroidal triples, and a property which implies that a complex is not $d$-representable: we can define $d+2$ points in $\geo{\simp K}$ to be in \emph{asteroidal position} if they are the image of the vertices of $\Delta_{d+1}^{(d)}$ under a $d$-asteroidal map. We now prove that any complex with $d+2$ points in asteroidal position is not $d$-Matou\v sek (and therefore by Proposition \ref{basic_Mat} not $d$-representable):

\begin{prop} \label{asteroidal_Mat}
Let $f:\geo{\Delta_{d+1}^{(d)}}\to\geo{\simp K}$ be a $d$-asteroidal map, for some simplicial complex $\simp K$ and $d\in\bbN$. Then $\simp K$ is not $d$-Matou\v sek.
\end{prop}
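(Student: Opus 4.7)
The plan is to use $f$ to manufacture an equivariant map from $\scdp{\Delta_{d+1}^{(d)}}$ to $\hat{\simp K}$, then exhibit an equivariant map $S^d \to \scdp{\Delta_{d+1}^{(d)}}$; combining these with a hypothetical equivariant map $\hat{\simp K} \to S^{d-1}$ (which would exist if $\simp K$ were $d$-Matou\v sek) contradicts Borsuk--Ulam.

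First I would set $g = \iota_{\simp K} \circ f : \geo{\Delta_{d+1}^{(d)}} \to \geo{\simp K'}$, with $\iota_{\simp K}$ taken from Lemma~\ref{dual_lemma}. Given $(x,y) \in \scdp{\Delta_{d+1}^{(d)}}$, the asteroidal hypothesis gives $\supp(f(x)) \cup \supp(f(y)) \notin \simp K$, while Lemma~\ref{dual_lemma} gives $\supp(f(x)) \su \bigcap \supp(g(x))$ and $\supp(f(y)) \su \bigcap \supp(g(y))$. Hence the set $\Par{\bigcap \supp(g(x))} \cup \Par{\bigcap \supp(g(y))}$ contains $\supp(f(x)) \cup \supp(f(y))$, and since $\simp K$ is closed under taking subsets, this larger union is also not in $\simp K$. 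Therefore $(g(x), g(y)) \in \hat{\simp K}$, and the coordinate-wise map $G: \scdp{\Delta_{d+1}^{(d)}} \to \hat{\simp K}$ given by $G(x,y) = (g(x), g(y))$ is a well-defined, continuous, equivariant map.

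The next step is to show $\ind{\scdp{\Delta_{d+1}^{(d)}}} \geq d$ by constructing an equivariant $\Phi : S^d \to \scdp{\Delta_{d+1}^{(d)}}$. Label the vertices of $\Delta_{d+1}^{(d)}$ by $[d+2]$, so $\geo{\Delta_{d+1}^{(d)}}$ is the boundary of the standard simplex in $\bbR^{d+2}$. Realize $S^d$ as $\set{x \in \bbR^{d+2} : \sum_i x_i = 0,\ \|x\| = 1}$. For each $x \in S^d$, decompose $x = x^+ - x^-$ with $x^+_i = \max(x_i, 0)$ and $x^-_i = \max(-x_i, 0)$, and set
\[
\Phi(x) = \Par{\frac{x^+}{\|x^+\|_1},\ \frac{x^-}{\|x^-\|_1}}.
\]
Since $\sum x_i = 0$ and $\|x\| = 1$, both $x^+$ and $x^-$ are nonzero vectors, so $\Phi$ is continuous. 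Each coordinate is a convex combination of standard basis vectors whose support is respectively $\set{i : x_i > 0}$ or $\set{i : x_i < 0}$; these supports are disjoint and each is a proper subset of $[d+2]$ (because the complementary part is nonempty), hence each coordinate lies in $\geo{\Delta_{d+1}^{(d)}}$. The identity $(-x)^\pm = x^\mp$ then shows $\Phi(-x)$ is the swap of $\Phi(x)$, giving equivariance.

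Finally, if $\simp K$ were $d$-Matou\v sek there would exist an equivariant $h : \hat{\simp K} \to S^{d-1}$, and then $h \circ G \circ \Phi : S^d \to S^{d-1}$ would be an equivariant map, contradicting Borsuk--Ulam. The step requiring the most care is the construction of $\Phi$: one must verify continuity at points where some $x_i$ vanishes and confirm that the two supports genuinely land in disjoint proper faces of $\Delta_{d+1}^{(d)}$, but both reduce to the observation that the constraint $\sum x_i = 0$ forces at least one strictly positive and one strictly negative coordinate.
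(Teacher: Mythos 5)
Your proof is correct and takes essentially the same route as the paper: you form $\iota_{\simp K}\circ f$, verify via Lemma \ref{dual_lemma} and downward closure of $\simp K$ that applying it coordinatewise gives an equivariant map $\scdp{\Delta_{d+1}^{(d)}}\to\hat{\simp K}$, and then contradict Borsuk--Ulam through the identification of $\scdp{\Delta_{d+1}^{(d)}}$ with $S^d$. The only difference is that where the paper simply cites the $\Zt$-space fact $\scdp{\Delta_{d+1}^{(d)}}=\scdp{\Par{\Delta_{d+1}}}\cong S^d$, you explicitly construct the (one-directional, but sufficient) equivariant map $\Phi:S^d\to\scdp{\Delta_{d+1}^{(d)}}$, which is in fact the inverse of the standard homeomorphism $(u,v)\mapsto (u-v)/\left\|u-v\right\|$, so your argument is a self-contained verification of the same step.
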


\begin{proof}
Denote $\simp L=\Delta_{d+1}^{(d)}$, and define $\phi:=\iota_{\simp K}\circ f:\geo{\simp L}\to\geo{\simp K'}$, where $\iota_{\simp K}:\geo{\simp K}\to\geo{\simp K'}$ is the map from Lemma \ref{dual_lemma}, which satisfies $\supp(z)\su\bigcap\supp\Par{\iota_{\simp K}(z)}$ for every $z\in\geo{\simp K}$. 
This implies that for every $x,y\in\geo{\simp L}$ with disjoint supports, we have
$$\simp K\not\ni\supp(f(x))\cup\supp(f(y))\su \Par{\bigcap\supp(\phi(x))}\cup\Par{\bigcap\supp(\phi(y))},$$
so by applying $\phi$ on each coordinate, we obtain an equivariant map $\scdp{\simp L}\to\hat{\simp K}$. Our claim now follows from Proposition \ref{basic_index_properties} and the fact that
$$\scdp{\simp L}=\scdp{\Par{\Delta_{d+1}}}\cong S^d$$
as a $\Zt$-space.
\end{proof}
\begin{rmk}
Note that we may also prove that an asteroidal map into $\simp K$ implies that $\simp K$ is not $d$-representable by using the more general statement given in Theorem \ref{non_rep}. We have our complex $\simp L=\Delta_{d+1}^{(d)}$ and a continuous $\phi=\iota_{\simp K}\circ f:\geo{\simp L}\to\geo{\simp K'}$. The fact that every continuous $\geo{\simp L}\to\Rd$ identifies some pair from disjoint faces is simply the topological version of Radon's theorem \cite{BB}.
\end{rmk}

\subsection{$1$-representable complexes}
In this subsection we prove:
\begin{thm} \label{Mat_d=1}
A simplicial complex is $1$-representable if and only if it is $1$-Matou\v sek.
\end{thm}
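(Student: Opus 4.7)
The forward direction is immediate from Proposition~\ref{basic_Mat}. For the converse, I would prove the contrapositive: if $\simp K$ is not $1$-representable, I would construct a $1$-asteroidal map into $\simp K$ and invoke Proposition~\ref{asteroidal_Mat} to conclude that $\simp K$ is not $1$-Matou\v sek.

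The key preliminary observation is that, by Helly's theorem in dimension one, any pairwise intersecting family of intervals has a common point, so every $1$-representable complex is the flag complex of its $1$-skeleton. Combining this with the Lekkerkerker--Boland characterization of interval graphs, $\simp K$ is $1$-representable if and only if $(i)$ $\simp K$ is flag, $(ii)$ its $1$-skeleton has no asteroidal triple, and $(iii)$ its $1$-skeleton has no induced cycle of length $\geq 4$. If $\simp K$ is not $1$-representable, at least one of $(i)$--$(iii)$ must fail.

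I would construct the required $1$-asteroidal map separately in each case. For an asteroidal triple $\{a, b, c\}$ in the $1$-skeleton, send the three vertices of $\Delta_2^{(1)}$ to $a, b, c$ and each edge of $\Delta_2^{(1)}$ to a path in the $1$-skeleton avoiding the closed neighborhood of the third triple-vertex; the avoidance property forces every disjoint-support pair in $\Delta_2^{(1)}$ to map to a pair whose union of supports contains some pair of non-adjacent vertices of $\simp K$, hence is not a face. For an induced cycle $v_1 \cdots v_k$ with $k \geq 4$, send the three vertices of $\Delta_2^{(1)}$ to midpoints of three consecutive edges $v_1v_2$, $v_2v_3$, $v_3v_4$ of the cycle, and extend each edge of $\Delta_2^{(1)}$ to an arc of the cycle (taking the long way around for the ``opposite'' pair); chord-freeness of the induced cycle ensures every resulting union of supports contains two non-adjacent cycle vertices. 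In the non-flag case, with a minimal non-face $F = \{v_1, \ldots, v_m\}$ ($m \geq 3$), pick three distinct indices $i_1, i_2, i_3 \in [m]$, send vertex $r$ of $\Delta_2^{(1)}$ to the barycenter of the face $F \setminus \{v_{i_r}\}$ (a face of $\simp K$ by minimality of $F$), and send edge $\{r, s\}$ of $\Delta_2^{(1)}$ to the straight path from one barycenter through the vertex $v_{i_t}$ (where $\{r, s, t\} = \{1, 2, 3\}$) to the other barycenter. Every pairwise union of supports arising from disjoint-support pairs in $\Delta_2^{(1)}$ then equals $F$, hence is not a face.

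The main obstacle is the uniform verification in the non-flag and small-cycle cases, where the vertices of $\Delta_2^{(1)}$ must be sent to interior points of $\geo{\simp K}$ (edge midpoints or barycenters of higher-dimensional faces) rather than to vertices of $\simp K$. In these cases, the image points and connecting paths must be chosen so that every relevant disjoint-support pair produces a union of supports lying outside $\simp K$; the verification is a finite combinatorial check exploiting, respectively, the non-chord property of the induced cycle or the minimality of the non-face $F$.
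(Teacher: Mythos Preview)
Your proposal is correct and follows essentially the same route as the paper: reduce via Lekkerkerker--Boland to three cases (non-flag, induced long cycle, asteroidal triple), build a $1$-asteroidal map in each, and conclude with Proposition~\ref{asteroidal_Mat}. The paper packages the three-case construction as a separate Proposition~\ref{asteroidal_d=1} and phrases the paths in terms of chains in $\sd(\simp K)$ rather than barycenters, and in the non-flag case routes the edge $\{r,s\}$ through the barycenter of $F\setminus\{v_{i_r},v_{i_s}\}$ instead of through the vertex $v_{i_t}$, but these are cosmetic differences; the verifications are the same finite checks you outline.
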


This gives us a topological description of $1$-representable complexes. To illustrate, consider Figure \ref{not_1rep_example} below:

\begin{center}
\refstepcounter{figure_num}\label{not_1rep_example}
\begin{tikzpicture}[scale=1.15]
\coordinate(x1) at (-4.2 + 0,0.577);
\filldraw (x1) circle (2pt) node[above right] {1};
\coordinate(x2) at (-4.2 + -0.5,-0.289);
\filldraw (x2) circle (2pt) node[above left] {2};
\coordinate(x3) at (-4.2 + 0.5,-0.289);
\filldraw (x3) circle (2pt) node[above right] {3};
\coordinate(x4) at (-4.2 + 0,1.577);
\filldraw (x4) circle (2pt) node[above] {4};
\coordinate(x5) at (-4.2 + -1.366,-0.789);
\filldraw (x5) circle (2pt) node[below left] {5};
\coordinate(x6) at (-4.2 + 1.366,-0.789);
\filldraw (x6) circle (2pt) node[below right] {6};

\draw (x1) -- (x2);
\draw (x1) -- (x3);
\draw (x1) -- (x4);
\draw (x2) -- (x3);
\draw (x2) -- (x5);
\draw (x3) -- (x6);

\path[fill, opacity=0.1] (x1) to (x2) to (x3) to cycle;


\coordinate(x123) at (0,0);
\filldraw (x123) circle (2pt) node[above right] {123};
\coordinate(x14) at (0,1);
\filldraw (x14) circle (2pt) node[above] {14};
\coordinate(x25) at (-0.866,-0.5);
\filldraw (x25) circle (2pt) node[below left] {25};
\coordinate(x36) at (0.866,-0.5);
\filldraw (x36) circle (2pt) node[below right] {36};

\draw (x123) -- (x14);
\draw (x123) -- (x25);
\draw (x123) -- (x36);

\scriptsize

\coordinate(r0) at (5 + 1.5,0 + 0);
\filldraw (r0) circle (2pt) node[right] {(25,36)};
\coordinate(r1) at (5 + 1.299,0 + 0.75);
\filldraw (r1) circle (2pt) node[right] {(25,123)};
\coordinate(r2) at (5 + 0.75,0 + 1.299);
\filldraw (r2) circle (2pt) node[above right] {(25,14)};
\coordinate(r3) at (5 + 0,0 + 1.5);
\filldraw (r3) circle (2pt) node[above] {(123,14)};
\coordinate(r4) at (5 + -0.75,0 + 1.299);
\filldraw (r4) circle (2pt) node[above left] {(36,14)};
\coordinate(r5) at (5 + -1.299,0 + 0.75);
\filldraw (r5) circle (2pt) node[left] {(36,123)};
\coordinate(r6) at (5 + -1.5,0 + 0);
\filldraw (r6) circle (2pt) node[left] {(36,25)};
\coordinate(r7) at (5 + -1.299,0 + -0.75);
\filldraw (r7) circle (2pt) node[left] {(123,25)};
\coordinate(r8) at (5 + -0.75,0 + -1.299);
\filldraw (r8) circle (2pt) node[below left] {(14,25)};
\coordinate(r9) at (5 + -0,0 + -1.5);
\filldraw (r9) circle (2pt) node[below] {(14,123)};
\coordinate(r10) at (5 + 0.75,0 + -1.299);
\filldraw (r10) circle (2pt) node[below right] {(14,36)};
\coordinate(r11) at (5 + 1.299,0 + -0.75);
\filldraw (r11) circle (2pt) node[right] {(123,36)};

\draw (r0) -- (r1);
\draw (r1) -- (r2);
\draw (r2) -- (r3);
\draw (r3) -- (r4);
\draw (r4) -- (r5);
\draw (r5) -- (r6);
\draw (r6) -- (r7);
\draw (r7) -- (r8);
\draw (r8) -- (r9);
\draw (r9) -- (r10);
\draw (r10) -- (r11);
\draw (r11) -- (r0);

\end{tikzpicture}
\scriptsize\\
Figure \arabic{figure_num}
\end{center}

From left to right: a simplicial complex $\simp K$ which is not $1$-representable despite being contractible, the dual complex $\simp K'$, and the configuration space $\hat{\simp K}$. The equivariant copy of $S^1$ within $\hat{\simp K}$ shows that $\simp K$ is not $1$-Matou\v sek. Our proof implies that this holds in general: for any non $1$-representable complex $\simp K$, we have an equivariant $S^1\to\hat{\simp K}$.

Note we don't give a new proof of \cite{LB}, but rely on their main result:

\begin{thm}[Lekkerkerker, Boland] \label{LB_simplified}
Let $\simp K\su 2^V$ be a simplicial complex, and let $\simp G$ be the $1$-skeleton of $\simp K$. Then $\simp K$ is $1$-representable if and only if the following three conditions are satisfied:
\begin{enumerate}
\item $\simp K$ is the clique complex of $\simp G$: whenever $I\su V$ and $\set{u,v}\in\simp G$ for all $u,v\in I$, we have $I\in\simp K$.
\item $\simp G$ has no induced cycle of length $n\geq 4$.
\item There is no $1$-asteroidal map $\geo{\Delta_2^{(1)}}\to\geo{\simp G}$ which maps vertices to vertices.
\end{enumerate}
\end{thm}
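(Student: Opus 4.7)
The plan is to derive Theorem \ref{LB_simplified} by combining the classical interval-graph theorem of Lekkerkerker and Boland (which applies to graphs only) with Helly's theorem in dimension 1. The key observation is that condition (1) is what lets us upgrade a graph-level interval representation to a complex-level one, while conditions (2) and (3) are exactly the classical obstructions for $\simp G$ to be an interval graph.

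For the forward direction, assume $\simp K$ is 1-representable by a family of intervals $\set{I_v}_{v\in V}\su\bbR$. Helly's theorem in one dimension says that a finite family of intervals with pairwise non-empty intersections has a common point; applied to any $I\su V$ all of whose pairs lie in $\simp G$, this gives $\bigcap_{v\in I} I_v\neq\emp$, hence $I\in\simp K$, establishing condition (1). Conditions (2) and (3) are well-known obstructions for interval graphs: an induced cycle of length $\geq 4$ would force two non-adjacent intervals to be bridged by a long chain that cannot fit on a line, and an asteroidal triple would require a path to ``go around'' a third interval, which is impossible in $\bbR$. Both obstructions are established in \cite{LB}.

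For the backward direction, assume (1), (2), (3) hold. Applied to $\simp G$ alone, conditions (2) and (3) are precisely the hypotheses of the classical Lekkerkerker--Boland theorem (where condition (3), phrased via 1-asteroidal maps, unwinds to the standard definition of an asteroidal triple by restricting to maps that send each edge of $\Delta_2^{(1)}$ to a path in $\simp G$ avoiding the closed neighborhood of the opposite vertex). Hence $\simp G$ is an interval graph: there is some family $\set{I_v}_{v\in V}\su\bbR$ whose intersection graph is $\simp G$. Let $\simp L$ be the nerve of this family in $\bbR$. Helly in dimension one implies that $\simp L$ equals the clique complex of $\simp G$. By condition (1), $\simp K$ is also the clique complex of $\simp G$, so $\simp K=\simp L$, and therefore $\simp K$ is 1-representable.

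The main obstacle is the graph-theoretic fact that every chordal, asteroidal-triple-free graph is an interval graph; this is the substantive content of \cite{LB}, whose proof proceeds via a careful inductive construction of the interval assignment using a linear ordering of the maximal cliques of $\simp G$. Once this graph-level statement is available, the passage from graphs to simplicial complexes is essentially formal, powered by condition (1) together with the one-dimensional Helly theorem.
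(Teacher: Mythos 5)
Your proposal is correct and follows essentially the same route as the paper: the paper's (very terse) proof likewise obtains the necessity of condition (1) from Helly's theorem and reduces conditions (2) and (3) to \cite[Theorem 3]{LB} via the observation that vertex-to-vertex $1$-asteroidal maps into $\simp G$ correspond exactly to classical asteroidal triples. You simply spell out the details the paper leaves implicit, notably the upgrade from an interval representation of $\simp G$ to one of $\simp K$ using condition (1) and one-dimensional Helly, which is exactly the intended argument.
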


\begin{proof}
The necessity of the 1st condition is due to Helly's theorem. The remaining conditions are equivalent to \cite[Theorem 3]{LB},
noting that any $3$ vertices in $\simp G$ form an asteroidal triple (see \cite[Definition 3]{LB}) if they are the image of the $3$ vertices of $\Delta_{2}^{(1)}$ under some $1$-asteroidal map.
\end{proof}

Next, we show that the entire theorem can be rephrased in terms of asteroidal maps:

\begin{prop} \label{asteroidal_d=1}
Let $\simp K\su 2^V$ be a simplicial complex which is not $1$-representable. Then there exists a $1$-asteroidal map $\geo{\Delta_2^{(1)}}\to\geo{\simp K}$.
\end{prop}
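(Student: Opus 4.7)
The plan is to invoke Theorem \ref{LB_simplified}: if $\simp K$ is not $1$-representable, at least one of its three listed conditions must fail, and I will build a $1$-asteroidal map $f\colon\geo{\Delta_2^{(1)}}\to\geo{\simp K}$ separately in each case. Writing $\simp G$ for the $1$-skeleton of $\simp K$, I use throughout that a set $S\subseteq V(\simp K)$ of size $\ge 2$ lies in $\simp K$ only if every pair in $S$ is an edge of $\simp G$, by downward closure.

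If condition~(3) fails, the hypothesis already supplies a $1$-asteroidal $f_0\colon\geo{\Delta_2^{(1)}}\to\geo{\simp G}$ sending vertices to vertices, and I simply compose with the inclusion $\geo{\simp G}\hookrightarrow\geo{\simp K}$. The key verification, for $x=e_i$ and $y$ on the image-path of the opposite source-edge $\{j,k\}$, is that asteroidality in $\simp G$ applied at any point where the path passes through a vertex $v$ of $\simp G$ yields $\{f_0(i),v\}\notin\simp G$; consequently every vertex appearing in $\supp(f_0(y))$ is non-adjacent in $\simp G$ to $f_0(i)$, so $\{f_0(i)\}\cup\supp(f_0(y))\notin\simp K$ regardless of whether that support is a vertex or an edge.

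If condition~(2) fails, fix an induced cycle $v_1 v_2\cdots v_n v_1$ with $n\ge 4$, and map the three vertices of $\Delta_2^{(1)}$ to the midpoints of $v_1v_2,v_2v_3,v_3v_4$, extending along the cycle by two short arcs (through $v_2$ and $v_3$) and one long arc (from $v_1$ around via $v_n,v_{n-1},\dots,v_4$). The three source-edges pairwise share a vertex, so only vertex/vertex and vertex/edge disjoint-support pairs need to be checked; in every such case the union of image-supports contains a pair of cycle vertices at cyclic distance $\ge 2$, which is not an edge of $\simp G$ by inducedness, so the union is not in $\simp K$.

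If condition~(1) fails, choose a minimal non-face $I\subseteq V(\simp K)$ with $|I|=k\ge 3$, so that $\simp K|_I = \partial\Delta_{k-1}$. Pick three distinct facets $T_1,T_2,T_3$ of $\partial\Delta_{k-1}$ (each of the form $I\setminus\{a_i\}$) and send vertex $i$ of $\Delta_2^{(1)}$ to the barycenter of $T_i$, connecting each pair of barycenters by two line segments meeting at the barycenter of $T_i\cap T_j$. Along such a map the support at any point is $T_i$, $T_j$, or $T_i\cap T_j$, each a proper subset of $I$; for any disjoint-support source pair, its union with the remaining facet $T_l$ equals $I$, which is not in $\simp K$ by the choice of $I$. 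The main technical obstacle is the support-tracking verification in cases~(2) and~(3): in~(3) one must carefully translate the classical asteroidal-triple statement (paths avoiding closed neighborhoods) into a support statement that survives passage from $\geo{\simp G}$ to $\geo{\simp K}$, while in~(2) the ``long arc'' is essential at $n=4$, since a fully symmetric construction using only short arcs would already fail at a vertex/vertex pair.
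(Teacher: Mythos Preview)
Your proposal is correct and follows essentially the same three-case argument as the paper. The paper phrases everything through the barycentric subdivision (choosing faces $F_1,F_2,F_3\in\simp K$ and chains in $\sd(\simp K)$ connecting them), whereas you work directly with barycenters and line segments in $\geo{\simp K}$; these are the same construction in different language. Your choices in cases~(1) and~(2) --- three facets $I\setminus\{a_i\}$ joined through their pairwise intersections, and three consecutive edge-midpoints on the induced cycle joined by two short arcs and one long arc --- match the paper's $F_i$ and $\pathlet_{jk}$ exactly (up to reindexing).

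One small point in case~(3): your inference ``consequently every vertex appearing in $\supp(f_0(y))$ is non-adjacent in $\simp G$ to $f_0(i)$'' does not follow from the asteroidality condition alone for an arbitrary continuous $f_0$, since a continuous path may dip into the interior of an edge $\{v,w\}$ and return without ever having $\{w\}$ as its support. The fix is immediate: since condition~(3) failing is equivalent (as the paper notes) to the existence of an asteroidal triple in the sense of Lekkerkerker--Boland, you may take the three connecting paths to be genuine graph paths, in which case every edge used has both endpoints visited as vertices, and your claim holds. The paper itself glosses over this entire verification (``If~(3) holds there is nothing more to prove''), so your treatment is in fact more explicit.
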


\begin{proof}
Note that $\geo{\Delta_2^{(1)}}$ is just a triangle (without the interior), so constructing a map $\geo{\Delta_2^{(1)}}\to\geo{\simp K}$ requires specifying $3$ points in $\geo{\simp K}$ and $3$ paths connecting them (for each vertex and each edge of $\geo{\Delta_2^{(1)}}$, respectively). We will specify our vertices and paths in $\sd(\simp K)$ - that is, we will find $F_1,F_2,F_3\in\simp K$, and $\pathlet_{12}, \pathlet_{13}, \pathlet_{23}\su\simp K$, such that each $\pathlet_{jk}$ is a path in $\sd(\simp K)$ from $F_j$ to $F_k$.\\
Denote, $\pathlet_{jk}=\set{F_{jk}^0,F_{jk}^1,\ldots,F_{jk}^n}$, such that $F_{jk}^0=F_j$, $F_{jk}^n=F_k$. For every $m=1,\ldots,n$ we require $F_{jk}^{m-1}\su F_{jk}^m$ or $F_{jk}^m\su F_{jk}^{m-1}$.\\
A point in $\geo{\sd(\simp K)}$ supported by $\set{F,G}$ corresponds to a point in $\geo{\simp K}$ supported by $F\cup G$. Thus our specified $F_1,F_2,F_3,\pathlet_{12},\pathlet_{13},\pathlet_{23}$ correspond to a $1$-asteroidal map into $\simp K$, if whenever $i=1,2,3$ and $F\in\pathlet_{jk}$ such that $i\neq j,k$, we have
$$F\cup F_i\notin\simp K.$$
Since $\simp K$ is not $1$-representable, from Theorem \ref{LB_simplified} we have at least one of the following:
\begin{enumerate}
\item There exists $I\su V$ such that $\set{u,v}\in\simp K$ for all $u,v\in I$, but $I\notin\simp K$.
\item $\simp K$ has an induced cycle of length $n\geq 4$.
\item There exists a $1$-asteroidal map $\geo{\Delta_2^{(1)}}\to\geo{\simp K}$ which maps vertices to vertices.
\end{enumerate}
If (1) holds, let $I\su V$ be an inclusion-minimal set satisfying the condition. Clearly we have $|I|\geq 3$, so let $v_1,v_2,v_3\in I$ be distinct vertices. Now define $F_i=I\setminus\set{v_i}$ for $i=1,2,3$, and $\pathlet_{jk}=\set{F_j,F_j\cap F_k,F_k}$ for $jk=23,13,12$. For every $F\in\pathlet_{jk}$ we have $v_i\in F$ (where $i\neq j,k$), therefore $F\cup F_i=I\notin\simp K$ and the requirement for asteroidality is fulfilled.\\
If (2) holds, let $(v_0,v_1,v_2,\ldots,v_n=v_0)$ be an induced cycle in $\simp K$ of length $n\geq 4$: the vertices $v_1,\ldots,v_n\in V$ are distinct, $\set{v_{i-1},v_i}\in\simp K$ for $i=1,2,\ldots,n$, and there are no other faces in $\simp K$ among $\set{v_1,\ldots,v_n}$. Let
$$F_1=\set{v_0,v_1}\quad ,\quad F_2=\set{v_1,v_2}\quad ,\quad F_3=\set{v_2,v_3},$$
and let
\begin{equation} \nonumber
\begin{split}
\pathlet_{12}&=\set{\set{v_0,v_1}, \set{v_1}, \set{v_1,v_2}},\\
\pathlet_{23}&=\set{\set{v_1,v_2}, \set{v_2}, \set{v_2,v_3}},\\
\pathlet_{13}&=\set{\set{v_n,v_1}, \set{v_n}, \set{v_{n-1},v_n}, \set{v_{n-1}}, \set{v_{n-2}, v_{n-1}}, \ldots, \set{v_3}, \set{v_2,v_3}}.
\end{split}
\end{equation}
Note that no face $F$ in $\pathlet_{jk}$ is a subset of $F_i$ (where $i\neq j,k$). Therefore $F\cup F_i$ contains $3$ or $4$ vertices, and in particular $\set{v_x,v_y}\su F\cup F_i$ for some non-consecutive $x,y$ (this is where we need $n\geq 4$). Since our cycle is induced, we must have $\set{v_x,v_y}\notin\simp K$, therefore $F\cup F_i\notin\simp K$, proving that the requirement for asteroidality is fulfilled.\\
If (3) holds there is nothing more to prove.
\end{proof}

We can now deduce Theorem \ref{Mat_d=1}:

\begin{proof}[Proof of Theorem \ref{Mat_d=1}]
Every $1$-representable complex is $1$-Matou\v sek (from Proposition \ref{basic_Mat}), so it remains to show that if $\simp K$ is not $1$-representable, it isn't $1$-Matou\v sek either. This immediately follows from Propositions \ref{asteroidal_Mat} and \ref{asteroidal_d=1}.
\end{proof}

\subsection{Representability of graphs and co-graphs}

If $\simp K$ is a complex such that $\dim\simp K'=1$ (i.e. $\simp K'$ is a graph), we informally say that $\simp K$ a co-graph. In these cases, we find that $d$-Matou\v sek and $d$-representability are identical:

\begin{thm} \label{codim_1}
Let $\simp K$ be a simplicial complex, such that $\dim\simp K'=1$. Then for every $d\in\bbN$, $\simp K$ is $d$-representable if and only if it is $d$-Matou\v sek.
\end{thm}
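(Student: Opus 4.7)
The forward implication is already Proposition \ref{basic_Mat}, so the content is to show that if $\simp K$ is $d$-Matou\v sek and $\dim\simp K'=1$, then $\simp K$ is $d$-representable. The plan is to establish the combinatorial identity
\[
\hat{\simp K}=\scdp{\simp K'}
\]
when $\simp K'$ is a graph, which reduces the Matou\v sek hypothesis to a statement about the deleted product of a graph, and then to invoke the classical characterization of graph embeddability in $\bbR^d$ via the $\Zt$-index of the deleted product.

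First I would prove the identity. The inclusion $\hat{\simp K}\subseteq\scdp{\simp K'}$ holds in full generality: if some facet $J$ lies in $\supp(x)\cap\supp(y)$, then both $\bigcap\supp(x)$ and $\bigcap\supp(y)$ are contained in $J\in\simp K$, hence so is their union, and $(x,y)\notin\hat{\simp K}$. For the reverse inclusion, given $(x,y)\in\scdp{\simp K'}$ I would split into cases on whether each of $\supp(x),\supp(y)$ is a vertex or an edge of the graph $\simp K'$. The key point is that $\dim\simp K'\leq 1$ forbids any three facets of $\simp K$ from having nonempty common intersection. So if, say, $\supp(x)=\{J_1,J_2\}$ is an edge of $\simp K'$ and we try to witness $(\bigcap\supp(x))\cup(\bigcap\supp(y))\in\simp K$ by finding a facet $J$ of $\simp K$ containing it, then $J_1\cap J_2\subseteq J$ together with $J_1\cap J_2\neq\emp$ would make $\{J_1,J_2,J\}$ a face of $\simp K'$, contradicting the dimension hypothesis. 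In the remaining case, where both supports are singletons $\{J_1\},\{J_2\}$ with $J_1\neq J_2$, the maximality of the facet $J_1$ immediately gives $J_1\cup J_2\notin\simp K$.

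With the identity in hand, the Matou\v sek hypothesis becomes $\ind\scdp{\simp K'}\leq d-1$. For $d=1$ the theorem is already contained in Theorem \ref{Mat_d=1}. For $d\geq 2$ I would invoke the classical fact that a graph $G$ linearly embeds in $\bbR^d$ iff $\ind\scdp G\leq d-1$: when $d=2$ this combines the van Kampen--Hanani--Tutte obstruction (to get planarity from the index bound) with F\'ary's theorem (to realize the planar embedding with straight edges); when $d\geq 3$ it is trivial by placing the vertices of $\simp K'$ in general position. Either way we obtain a continuous linear injection $\geo{\simp K'}\to\bbR^d$, so Theorem \ref{codim_rep} produces the desired $d$-representation of $\simp K$.

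The main obstacle is the combinatorial identity: the case analysis, while elementary, must be carried out carefully in all four support-combinations to confirm that the dimension hypothesis really does collapse the gap between $\hat{\simp K}$ and $\scdp{\simp K'}$. Once this is done, the remainder of the argument is a clean reduction to existing graph-embedding theorems already referenced in the paper.
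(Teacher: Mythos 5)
Your proposal is correct and follows essentially the same route as the paper: the identity $\hat{\simp K}=\scdp{\simp K'}$ is precisely the paper's Lemma \ref{cograph_dual}, and the rest is the paper's own reduction --- $d=1$ via Theorem \ref{Mat_d=1}, $d=2$ via Sarkaria's index criterion for planarity (Lemma \ref{graph_del}) together with F\'ary's theorem and Theorem \ref{codim_rep}, and $d\geq 3$ trivially from Theorem \ref{codim_rep}. One caution when you carry out the edge case of the identity: if the witnessing facet $J$ happens to equal $J_1$ or $J_2$, the set $\set{J,J_1,J_2}$ has only two distinct elements and yields no dimension contradiction, so there you must instead conclude $J\in\supp(x)$ (and symmetrically $J\in\supp(y)$), contradicting disjointness of supports --- exactly the extra step the paper's lemma carries out.
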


The first case not covered by previously stated results is when $d=2$, and we will show that $2$-representability and $2$-Matou\v sek are both equivalent to the planarity of $\simp K'$. We break this down into a number of lemmas. First, we prove that our configuration space is simply the deleted product of $\simp K'$:

\begin{lem} \label{cograph_dual}
Let $\simp K$ be a simplicial complex such that $\dim\simp K'=1$, i.e. $\simp G:=\simp K'$ is a graph. Then $\hat{\simp K}=\scdp{\simp G}$.
\end{lem}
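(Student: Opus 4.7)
The plan is to verify the set equality $\hat{\simp K} = \scdp{\simp G}$ directly from the definitions, exploiting the hypothesis $\dim \simp K' = 1$ to force the following key fact: for every $x \in \geo{\simp G}$ and every facet $J \in \m{\simp K}$ with $\bigcap \supp(x) \subseteq J$, we must have $J \in \supp(x)$. Once this is established, both inclusions fall out easily.

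First, I would set up notation: for $(x,y) \in \geo{\simp G}^2$, write $F = \bigcap \supp(x)$ and $G = \bigcap \supp(y)$. Since $\supp(x), \supp(y) \in \simp K'$ are nonempty faces, both $F$ and $G$ are nonempty subsets of $V(\simp K)$, and in fact each is a face of $\simp K$ (being contained in any facet in the support). Because $\dim \simp K' = 1$, each of $\supp(x)$ and $\supp(y)$ has cardinality $1$ or $2$.

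The easy inclusion is $\scdp{\simp G} \subseteq \hat{\simp K}$, contrapositively: if $J \in \supp(x) \cap \supp(y)$ is a common facet, then $F \cup G \subseteq J$, so $F \cup G \in \simp K$ and $(x,y) \notin \hat{\simp K}$. For the reverse inclusion $\hat{\simp K} \supseteq \scdp{\simp G}$ (again contrapositively), suppose $F \cup G \in \simp K$ and pick a facet $J \in \m{\simp K}$ with $F \cup G \subseteq J$. The goal is to show $J \in \supp(x) \cap \supp(y)$. By symmetry it suffices to show $J \in \supp(x)$, and I would split on $|\supp(x)|$: if $\supp(x) = \{J'\}$ is a single facet, then $F = J' \subseteq J$ forces $J' = J$ since both are facets; if $\supp(x) = \{J_1, J_2\}$ is an edge, then $F = J_1 \cap J_2 \neq \emp$, and assuming $J \notin \{J_1, J_2\}$ would make $\{J_1, J_2, J\}$ a triple of distinct facets with nonempty common intersection $\supseteq F$, hence a $2$-dimensional face of $\simp K'$, contradicting $\dim \simp K' = 1$.

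The main (minor) obstacle is simply making sure the dimension hypothesis is invoked correctly — the entire argument hinges on the fact that in a $1$-dimensional $\simp K'$, one cannot add a new facet to an existing edge while keeping the common intersection nonempty. There are no subtleties beyond that; no topological input is needed, only a careful case analysis on the sizes of $\supp(x)$ and $\supp(y)$.
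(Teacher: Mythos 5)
Your proposal is correct and takes essentially the same approach as the paper: both reduce the set equality to the equivalence $\supp(x)\cap\supp(y)\neq\emp \iff \bigl(\bigcap\supp(x)\bigr)\cup\bigl(\bigcap\supp(y)\bigr)\in\simp K$, with the nontrivial direction proved by the identical case split on whether the support is a singleton or an edge, using that a third distinct facet containing $J_1\cap J_2$ would create a $2$-dimensional face of $\simp K'$. One cosmetic slip: your first argument (a common facet forces $F\cup G\in\simp K$) is the contrapositive of $\hat{\simp K}\su\scdp{\simp G}$ rather than of $\scdp{\simp G}\su\hat{\simp K}$, so your two inclusion labels are swapped, but both inclusions are in fact established.
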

\begin{proof}
We will prove that for any non-empty faces $\alpha,\beta\in\simp G$ we have
$$\alpha\cap\beta\neq\emp\ \iff\ \Par{\bigcap\alpha}\cup\Par{\bigcap\beta}\in\simp K.$$
This is straightforward: if $J\in\alpha\cap\beta$, then $(\cap\alpha)\su J$ and also $(\cap\beta)\su J$, so $(\cap\alpha)\cup(\cap\beta)\su J\in\simp K$ implying $(\cap\alpha)\cup(\cap\beta)\in\simp K$.\\
On the other hand, suppose $(\cap\alpha)\cup(\cap\beta)\in\simp K$. Then $(\cap\alpha)\cup(\cap\beta)\su J$ for some $\simp K$-maximal face $J$. In particular $\cap\alpha\su J$. Since $\simp G$ is a graph, $\alpha$ is either a singleton or an edge. If $\alpha=\set{J_1}$, then $\cap\alpha\su J$ implies $J_1\su J$. But $J_1\in\m{\simp K}$ is a maximal face in $\simp K$, so $J=J_1$. If $\alpha=\set{J_1,J_2}$ for $J_1\neq J_2$, then $\emp\neq J_1\cap J_2\su J$ (recall that $\alpha\in\simp G=\simp K'$ thus $\cap\alpha\neq\emp$), therefore $\emp\neq J_1\cap J_2=J\cap J_1\cap J_2$ implying $\set{J,J_1,J_2}\in\simp K'=\simp G$. But $\simp G$ is a graph, so in fact $\set{J,J_1,J_2}$ has at most $2$ distinct elements, and $J=J_1$ or $J=J_2$. In every case, we have $J\in\alpha$. The same argument applies for $\beta$, therefore $J\in\beta$ and $\alpha\cap\beta\neq\emp$ as required.
\end{proof}

This allows us to prove our theorem by relying on the following criterion for graph planarity from \cite[``A graph planarity criterion'' p. 597]{Sa}:
\begin{lem}[Sarkaria] \label{graph_del}
Let $\simp G$ be a graph. Then $\simp G$ is planar iff $\ind{\scdp{\simp G}}\leq 1$.
\end{lem}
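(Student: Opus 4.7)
The plan is to prove the two directions separately. The forward implication is the straightforward one. Assuming $\simp G$ is planar, fix a topological embedding $\phi:\geo{\simp G}\to\bbR^2$ (which exists by definition of planarity). Applying $\phi$ coordinatewise yields a continuous map $\scdp{\simp G}\to\topdp{\bbR^2}$: injectivity of $\phi$ guarantees that disjoint supports produce distinct images, so the target lies in the deleted product. This map is equivariant, and the standard retraction $(u,v)\mapsto (u-v)/\left|u-v\right|$ provides an equivariant map $\topdp{\bbR^2}\to S^1$. Composing gives an equivariant $\scdp{\simp G}\to S^1$, so $\ind{\scdp{\simp G}}\leq 1$.

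For the converse I would argue the contrapositive: if $\simp G$ is non-planar, then $\ind{\scdp{\simp G}}\geq 2$. By Kuratowski's theorem, $\simp G$ contains a subgraph $H$ which is a subdivision of $K_5$ or of $K_{3,3}$. The inclusion of simplicial complexes $H\hookrightarrow\simp G$ gives an equivariant inclusion $\scdp H\hookrightarrow\scdp{\simp G}$, so by the monotonicity half of Proposition \ref{basic_index_properties} it suffices to show $\ind{\scdp H}\geq 2$. A short subdivision-invariance step reduces this further to the two specific graphs $K_5$ and $K_{3,3}$: subdividing an edge replaces a single $1$-simplex by two $1$-simplices glued at a new vertex, and one checks that on each coordinate this operation induces an equivariant deformation retraction of the deleted product (the interval $\geo{\mathrm{edge}}$ deformation retracts onto itself compatibly with the splitting), so $\ind{\scdp H}=\ind{\scdp{H_0}}$ where $H_0\in\set{K_5,K_{3,3}}$.

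The core of the proof is therefore the two inequalities $\ind{\scdp{K_5}}\geq 2$ and $\ind{\scdp{K_{3,3}}}\geq 2$, and this is where I expect the only real work. The plan is to follow Sarkaria's approach and identify each deleted product explicitly. For $K_5$ the space $\scdp{K_5}$ can be decomposed into cells of the form $\geo{\set{i,j}}\times\geo{\set{k,l}}$ where $\set{i,j}$ and $\set{k,l}$ are disjoint edges of $K_5$ (together with lower-dimensional pieces where one coordinate is a vertex), and a direct cell count shows it is a closed $2$-manifold with a free $\Zt$-action; an analogous description holds for $K_{3,3}$. Standard $\Zt$-cohomology with coefficients in $\bbZ_2$ then produces a non-vanishing class in degree $2$ (morally the van Kampen obstruction class $w_1^2$ pulled back from $B\Zt$), which is incompatible with the existence of an equivariant map to $S^1$, since such a map would factor the top class through $H^2(S^1;\bbZ_2)=0$.

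The main obstacle is thus the cohomological lower bound for the two Kuratowski graphs; everything else (Kuratowski, monotonicity, subdivision-invariance, and the forward direction) is formal once the setup is in place. Since the statement is cited directly from \cite{Sa}, a reasonable alternative is to invoke Sarkaria's computation as a black box rather than reproving it, but the plan above indicates how one would fill in the details.
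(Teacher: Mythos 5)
The paper offers no proof of this lemma at all: it is imported verbatim from Sarkaria \cite{Sa} (``A graph planarity criterion,'' p.~597), so there is nothing internal to compare against, and your sketch should be judged as a reconstruction of the cited result. Its architecture is sound and does follow Sarkaria's route: the forward direction is correct as written; Kuratowski plus the monotonicity in Proposition \ref{basic_index_properties} correctly reduces everything to $K_5$ and $K_{3,3}$; and the deleted products of those two graphs are indeed closed orientable surfaces (Euler characteristics $20-60+30=-10$ and $30-72+36=-6$, hence genus $6$ and $4$), with the right mechanism at the end: an equivariant map to $S^1$ would force the square of the classifying class of the double cover to factor through $H^2(\bbR P^1;\Zt)=0$. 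Two repairs are needed, though. First, your subdivision step claims an equivariant deformation retraction giving $\ind{\scdp{H}}=\ind{\scdp{H_0}}$; that claim is both unjustified (the deleted product of a subdivision is strictly \emph{larger} --- points in the two halves of a subdivided edge acquire disjoint supports --- so retracting it equivariantly onto $\scdp{H_0}$ is not a one-line check) and unnecessary. Under the canonical homeomorphism $\geo{H}\cong\geo{H_0}$, a point's support in the subdivision is carried inside its support in $H_0$, so pairs with disjoint $H_0$-supports have disjoint subdivision supports; hence $\scdp{H_0}\su\scdp{H}$ equivariantly, and monotonicity gives $\ind{\scdp{H_0}}\le\ind{\scdp{H}}$, which is the only inequality your contrapositive uses.

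Second, and more substantively, the degree-$2$ class does \emph{not} ``then'' appear from the surface structure and freeness of the action: the torus with the free involution rotating one circle factor by a half-period admits the projection to that factor as an equivariant map to $S^1$, so a closed surface with free $\Zt$-action can perfectly well have index $1$. Worse, since your two surfaces are orientable, Wu's formula gives $a\cup a=0$ for \emph{every} $a\in H^1(X;\Zt)$ upstairs; the nonzero square must be exhibited in the non-orientable quotients ($\chi=-5$ and $\chi=-3$), and its nonvanishing is exactly the mod-$2$ van Kampen obstruction for $K_5$ and $K_{3,3}$ --- equivalently, the Hanani--Tutte-type statement that in a generic drawing of either graph an odd number of pairs of disjoint edges cross an odd number of times. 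That computation (together with the link check that these cell complexes really are closed manifolds, which a cell count alone does not establish) is the entire content of the lemma, so it must either be carried out or cited; you do flag it as ``the only real work,'' which is fair, but as written the sketch presents it as formal when it is not. With the first step simplified as above and the second either computed or delegated to \cite{Sa} --- the latter being exactly what the paper does --- your proposal is correct.
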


\begin{proof}[Proof of Theorem \ref{codim_1}]
By Theorem \ref{codim_rep}, every $\simp K$ with $\dim\simp K'=1$ is $3$-representable (and thus also $3$-Matou\v sek by Proposition \ref{basic_Mat}). This clearly implies the same for any $d\geq 3$. For $d=1$ we apply Theorem \ref{Mat_d=1}, so the only remaining case is $d=2$. For this we denote $\simp G=\simp K'$, and prove that the following our equivalent:
\begin{enumerate}
\item $\simp G$ is planar.
\item $\simp K$ is $2$-representable.
\item $\simp K$ is $2$-Matou\v sek.
\end{enumerate}
To prove (1)$\implies$(2), note that any planar graph can by drawn in $\Rt$ using straight line segments, 
so if $\simp G$ is planar we obtain a linear injective map $\geo{\simp G}\to\Rt$, and conclude from Theorem \ref{codim_rep} that $\simp K$ is $2$-representable. The implication (2)$\implies$(3) is simply Proposition \ref{basic_Mat} for $d=2$. To prove (3)$\implies$(1), by Lemma \ref{cograph_dual} we have $\hat{\simp K}=\scdp{\simp G}$. If $\simp K$ is $2$-Matou\v sek then $\ind{\hat{\simp K}}\leq 1$, and Lemma \ref{graph_del} implies that $\simp G$ is planar.
\end{proof}


In contrast to our previous results, for a simplicial complex $\simp K$ which is a a graph, being $2$-representable and being $2$-Matou\v sek are not identical properties. Some previous results regarding dimension are relevant: we know that $\simp K$ is $1$-representable iff it is $1$-Matou\v sek (from Theorem \ref{Mat_d=1}). Every simplicial complex $\simp K$ is $(2\dim\simp K+1)$-representable \cite{TaS}, therefore $\simp K$ is $d$-representable and $d$-Matou\v sek, for every $d\geq 3$. And of course every $2$-representable complex is also $2$-Matou\v sek. But the converse fails: there exist graphs $\simp K$ which are $2$-Matou\v sek but not $2$-representable: see Example \ref{bad_cover}.

There are, however, non-trivial examples of graphs which are not $2$-Matou\v sek. For example, let $\simp K$ be the barycentric subdivision of some non-planar graph $\simp G$. Consider the canonical map $\phi:\geo{\simp G}\to\geo{\simp K'}$ which is the composition of the homeomorphism $\geo{\simp G}\to\geo{\simp K}$ and $\iota_{\simp K}:\geo{\simp K}\to\geo{\simp K'}$ given in Lemma \ref{dual_lemma}. We can verify that applying $\phi$ on each coordinate gives us an equivariant map $\scdp{\simp G}\to\hat{\simp K}$. Since $\simp G$ is not planar, Lemma \ref{graph_del} implies that $\ind{\scdp{\simp G}}=2$, therefore $\simp K$ is not $2$-Matou\v sek by Proposition \ref{basic_index_properties}.


\subsection{$d$-Matou\v sek and $d$-collapsibility}

As we have seen, every $d$-representable complex is both $d$-collapsible and $d$-Matou\v sek (for every $d\in\bbN$). This raises the question of the relation between $d$-collapsibility and $d$-Matou\v sek. We give examples that prove the two properties are independent (neither implies the other), and even together don't coincide with $d$-representable complexes. In other words, there exist simplicial complexes which are $d$-collapsible but not $d$-Matou\v sek, other complexes which are $d$-Matou\v sek but not $d$-collapsible, and others which are both $d$-Matou\v sek and $d$-collapsible but not $d$-representable.

\subsubsection{Complexes that are $d$-collapsible but not $d$-Matou\v sek}
There exist complexes which are $d$-collapsible but not $d$-Matou\v sek even for $d=1$: simply take a complex which is $1$-collapsible but not $1$-representable. Such a complex is not $1$-Matou\v sek by Theorem \ref{Mat_d=1}.

Using asteroidal maps, we may generalize one such complex to arbitrary dimension:
\begin{exmp}
For any $d\in\bbN$, let $\simp L$ have vertices $V=0,1,\ldots,d+2$, and maximal faces given by
$$\m{\simp L}=\set{F\su V\ :\ |F|=d+1\ ,\ 0\in F}.$$
Note that $\simp L$ is the cone of $\Delta_{d+1}^{(d-1)}$. This allows us to define a map $g:\geo{\Delta_{d+1}^{(d)}}\to\geo{\simp L}$: we take $g$ as the identity on any simplex of dimension $\leq d-1$, and extend to $d$-dimensional simplices by contracting the boundary into the vertex $0$.

Now let $\simp K$ be the barycentric subdivision of $\simp L$. It is straightforward to verify that $\simp K$ is $d$-collapsible, and that by composing $g$ with the canonical homeomorphism between $\simp L$ and $\simp K$ we obtain an asteroidal map $f:\geo{\Delta_{d+1}^{(d)}}\to\geo{\simp K}$. Thus by Proposition \ref{asteroidal_Mat}, $\simp K$ is not $d$-Matou\v sek. If $d=1$, we obtain the (unique) minimal non-$1$-representable tree.
\end{exmp}

\comment{
In the previous example(s), we used asteroidal maps to prove that $\simp K$ is not $d$-Matou\v sek. In the following example $\simp K$ is not $2$-Matou\v sek, but a $2$-asteroidal map into $\simp K$ can't exist:

\begin{exmp}
Let $\simp K$ be the simplicial complex with $V=[9]$, and maximal faces:
$$\m{\simp K}\ =\ \set{147\ ,\ 258\ ,\ 369\ ,\ 168\ ,\ 249\ ,\ 357}\ .$$
It is easy to verify that $\simp K$ is $2$-collapsible, and $\simp K'$ is just the complete bipartite graph $K_{3,3}$, therefore it is not $2$-Matou\v sek by Lemmas \ref{cograph_dual}-\ref{graph_del}.
\end{exmp}}

\subsubsection{Complexes that are $d$-Matou\v sek but not $d$-collapsible}

\begin{exmp} \label{Mat_non_col}
In \cite{TaE}, Tancer constructed a good cover in $\Rt$, which has a nerve $\simp K$ that isn't $2$-collapsible. We know that $\simp K$ is $2$-Matou\v sek from Corollary \ref{good_cover_Mat}.
\end{exmp}

\subsubsection{Complexes that are $d$-Matou\v sek and $d$-collapsible, but not $d$-representable}

The previous examples show that $d$-Matou\v sek and $d$-collapsibility are ``independent'' properties of $d$-representable complexes. Even combining both notions, we still remain with some complexes which are not $d$-representable, even when	 $d=2$. We give two examples of such complexes, one which is the nerve of a good cover in $\Rt$, and one which is not.

\begin{exmp}
Let $\simp K$ be the triangulation of the Mobius strip, with vertices $V=[6]$, and maximal faces
$$\m{\simp K}\ =\ \set{124\ ,\ 125\ ,\ 156\ ,\ 235\ ,\ 236\ ,\ 246\ ,\ 345\ ,\ 456}.$$
This complex was given by Eckhoff in \cite{Eck} as an example of a (strongly) $2$-collapsible complex that is not $2$-representable. The complex is the nerve of a good cover in $\Rt$, outlined in Figure \ref{mobious_watch}:

\begin{center}
\refstepcounter{figure_num}\label{mobious_watch}
\scriptsize
\begin{tikzpicture}[scale=1.25]
\coordinate (x235) at (0,0);
\coordinate (x125) at (0,2);
\coordinate (x156) at (1.902,2.618);
\coordinate (x124) at (-1.902,2.618);
\coordinate (x345) at (1.902,-0.618);
\coordinate (x236) at (-1.902,-0.618);
\coordinate (x456) at (3.078,1);
\coordinate (x246) at (-3.078,1);

\coordinate (out1) at (4,1.5);
\coordinate (mid1) at (4,1);
\coordinate (inn1) at (4,0.5);
\coordinate (out2) at (6,-0.5);
\coordinate (mid2) at (5.5,-0.5);
\coordinate (inn2) at (5,-0.5);
\coordinate (out3) at (4,-2.5);
\coordinate (mid3) at (4,-2);
\coordinate (inn3) at (4,-1.5);
\coordinate (out4) at (2,-2.5);
\coordinate (mid4) at (2,-2);
\coordinate (inn4) at (2,-1.5);
\coordinate (out5) at (-2,-2.5);
\coordinate (mid5) at (-2,-2);
\coordinate (inn5) at (-2,-1.5);
\coordinate (out6) at (-4,-2.5);
\coordinate (mid6) at (-4,-2);
\coordinate (inn6) at (-4,-1.5);
\coordinate (out7) at (-6,-0.5);
\coordinate (mid7) at (-5.5,-0.5);
\coordinate (inn7) at (-5,-0.5);
\coordinate (out8) at (-4,1.5);
\coordinate (mid8) at (-4,1);
\coordinate (inn8) at (-4,0.5);

\node (x1) at (0,2.35) {\large {\textbf {1}}};
\node (x2) at (-1.376,1) {\large {\textbf {2}}};
\node (x3) at (0,-0.35) {\large {\textbf {3}}};
\node (r4) at (3.3, 0.6) {\large {\textbf {4}}};
\node (l4) at (-3.3, 1.4) {\large {\textbf {4}}};
\node (br4) at (2, -1.75) {\large {\textbf {4}}};
\node (bl4) at (-2, -2.25) {\large {\textbf {4}}};
\node (x5) at (1.376,1) {\large {\textbf {5}}};
\node (r6) at (3.3, 1.4) {\large {\textbf {6}}};
\node (l6) at (-3.3, 0.6) {\large {\textbf {6}}};
\node (br6) at (2, -2.25) {\large {\textbf {6}}};
\node (bl6) at (-2, -1.75) {\large {\textbf {6}}};

\draw (x235) -- (x125);
\draw (x125) -- (x156);
\draw (x156) -- (x456);
\draw (x456) -- (x345);
\draw (x345) -- (x235);
\draw (x125) -- (x124);
\draw (x124) -- (x246);
\draw (x246) -- (x236);
\draw (x236) -- (x235);
\draw (x124) -- (x156);
\draw (x236) -- (x345);
\draw (x156) to [out=0, in=180] (out1) to [out=0, in=90] (out2) to [out=270, in=0] (out3) to [out=180, in=0] (out4) to [out=180, in=0] (inn5) to [out=180, in=0] (inn6) to [out=180, in=270] (inn7) to [out=90, in=180] (inn8) to [out=0, in=180] (x236);
\draw (x456) to (mid1) to [out=0, in=90] (mid2) to [out=270, in=0] (mid3) to (mid6) to [out=180, in=270] (mid7) to [out=90, in=180] (mid8) to (x246);
\draw (x345) to [out=0, in=180] (inn1) to [out=0, in=90] (inn2) to [out=270, in=0] (inn3) to [out=180, in=0] (inn4) to [out=180, in=0] (out5) to [out=180, in=0] (out6) to [out=180, in=270] (out7) to [out=90, in=180] (out8) to [out=0, in=180] (x124);

\path[fill, opacity=0.1] (x124) to (x246) to (x236) to (x345) to (x456) to (x156) to cycle;
\path[fill, opacity=0.1] (x156) to [out=0, in=180] (out1) to [out=0, in=90] (out2) to [out=270, in=0] (out3) to [out=180, in=0] (out4) to [out=180, in=0] (inn5) to [out=180, in=0] (inn6) to [out=180, in=270] (inn7) to [out=90, in=180] (inn8) to [out=0, in=180] (x236) to (x246) to (x124) to [out=180, in=0] (out8) to [out=180, in=90] (out7) to [out=270, in=180] (out6) to [out=0, in=180] (out5) to [out=0, in=180] (inn4) to [out=0, in=180] (inn3) to [out=0, in=270] (inn2) to [out=90, in=0] (inn1) to [out=180, in=0] (x345) to (x456) to cycle;

\end{tikzpicture}
\\Figure \arabic{figure_num}\\
\end{center}

The sets $U_1,\ldots,U_6\su\Rt$ should be open balls that slightly overlap the boundaries indicated in the figure. We deduce that $\simp K$ is $2$-Matou\v sek from Corollary \ref{good_cover_Mat}.
\end{exmp}

\begin{exmp} \label{bad_cover}
We will now give an example of a graph $\simp K$ which is $2$-collapsible and $2$-Matou\v sek, but not the nerve of any good cover in $\Rt$.\\
A graph $\simp G$ is a \emph{string graph} if it is the nerve of a collection of curves in $\Rt$. In \cite{KM}, Kratochv\'il and Matou\v sek constructed string graphs for which the number of intersection points between any such collection of curves in $\Rt$ is exponential in the number of curves. One instance of this construction will be our example.
Consider the graph $\simp G$ drawn in $\Rt$ in \cite{KM} (some vertices have been renamed), reproduced as Figure \ref{string_example} below:\pagebreak
\begin{center}
\refstepcounter{figure_num}\label{string_example}
\begin{tikzpicture}[scale=1.8]
\coordinate(c0) at (0,0);
\coordinate(c1) at (1,0);
\coordinate(c2) at (2,0);
\coordinate(c3) at (4,0);
\coordinate(c4) at (7,0);
\coordinate(l) at (0,1);
\coordinate(u2) at (1,1);
\coordinate(u1) at (2,1);
\coordinate(v2) at (3,1);
\coordinate(u0) at (4,1);
\coordinate(v1) at (5.5,1);
\coordinate(r) at (7,1);
\coordinate(d0) at (0,4);
\coordinate(a) at (4,4);
\coordinate(b) at (4,2);
\coordinate(d1) at (7,4);

\draw (c0) -- (c4);
\draw (l) -- (r);
\draw (d0) -- (d1);
\draw (c0) -- (d0);
\draw (c1) -- (u2);
\draw (c2) -- (u1);
\draw (c3) -- (a);
\draw (c4) -- (d1);

\coordinate (ab_center) at (4,3);
\coordinate (ab_low) at (4,2.5);
\coordinate (ab_top) at (4,3.5);
\coordinate (v1y) at (6.25,1);
\coordinate (v1_low) at (5.5,0.3);
\coordinate (u0v1) at (4.75,1);

\draw[dashed] (u1) to [out=90, in=180] (ab_center) to [out=0, in=90] (v1);
\draw[dashed]
(u2) to [out=90, in=180] (ab_top) to [out=0, in=90] (v1y) to [out=270, in=0] (v1_low) to [out=180, in=270] (u0v1) to [out=90, in=0] (ab_low) to [out=180, in=90] (v2);

\draw[fill=white] (c0) circle (1.25pt) node[below left] {$c_0$};
\filldraw (c0) circle (1.25pt) node[below left] {$c_0$};
\filldraw (c1) circle (1.25pt) node[below] {$c_1$};
\filldraw (c2) circle (1.25pt) node[below] {$c_2$};
\filldraw (c3) circle (1.25pt) node[below] {$c_3$};
\filldraw (c4) circle (1.25pt) node[below right] {$c_4$};
\filldraw (l) circle (1.25pt) node[left] {$l$};
\filldraw (u2) circle (1.25pt) node[above left] {$u_2$};
\filldraw (u1) circle (1.25pt) node[above left] {$u_1$};
\filldraw (v2) circle (1.25pt) node[above left] {$v_2$};
\filldraw (u0) circle (1.25pt) node[above right] {$u_0$};
\filldraw (v1) circle (1.25pt) node[below] {$v_1$};
\filldraw (r) circle (1.25pt) node[right] {$r$};
\filldraw (d0) circle (1.25pt) node[above left] {$d_0$};
\filldraw (a) circle (1.25pt) node[above] {$a$};
\filldraw (b) circle (1.25pt) node[left] {$b$};
\filldraw (d1) circle (1.25pt) node[above right] {$d_1$};

\end{tikzpicture}
\scriptsize\\
Figure \arabic{figure_num}
\end{center}
Note that $\simp G$ includes the edges drawn using dashed lines ($u_1v_1$ and $u_2v_2$) as well as those drawn with solid lines.

We define our simplicial complex $\simp K$ as the barycentric subdivision of $\simp G$, with the addition of pairs of $\simp G$-edges which intersect in Figure \arabic{figure_num}. In other words $V(\simp K)=V(\simp G)\cup E(\simp G)$, and the maximal faces of $\simp K$ are all pairs of the form $\set{x,xy}$ where $xy$ is an edge in $\simp G$, along with $4$ additional pairs:
$$\ \ \set{ab,u_1v_1}\quad,\quad\set{ab,u_2v_2}\quad,\quad\set{u_0v_1,u_2v_2}\quad,\quad\set{v_1r,u_2v_2}\ .$$
Note that $\simp K$ is itself a graph (maximal faces of $\simp K$ are $1$-dimensional), so $\simp K$ is trivially $2$-collapsible. 
Our next goal is to prove that $\simp K$ is $2$-Matou\v sek (in fact we show that there exists a faithful map $\geo{\simp K'}\to\Rt$), and that $\simp K$ is not the nerve of any good cover in $\Rt$. Both of these facts can be deduced by examining the drawing of $\simp G$ in $\Rt$.

To ease notation, for every $x\in V(\simp G)$, let $\overline x\in\Rt$ denote the position of $x$ indicated by the drawing, and for every edge $xy\in E(\simp G)$ let $\overline{xy}\su\Rt$ denote all points on the indicated path from $\overline x$ to $\overline y$.

The drawing shows that $\simp K$ is ``almost'' the nerve of a good cover in $\Rt$: for every $x\in V(\simp G)$, let $U_x\su\Rt$ be an open ball of radius $\varepsilon>0$ centered around $\overline x$. For every $e=xy\in E(\simp G)$, let $U_e\su\Rt$ be the $\frac{\varepsilon}{10}$-neighborhood of the points in $\overline{xy}\setminus\Par{U_x\cup U_y}$. We may choose $\varepsilon$ to be sufficiently small as not to introduce unwanted intersections (i.e. intersections among collections not in $\simp K$). For example, $U_{ab}\cap U_b$ and $U_{bu_0}\cap U_b$ must be nonempty, but $U_{ab}\cap U_{bu_0}=\emp$. Clearly the \emph{only} intersection of sets among the collection $\set{U_w}_{w\in V(\simp K)}$ which is neither empty nor contractible is $U_{ab}\cap U_{u_2v_2}$, which is a disjoint union of $2$ contractible sets. Nevertheless, the proof of Lemma \ref{good_cover_faithful} still applies (see Remark \ref{almost_good_cover}), we obtain a faithful map $\geo{\simp K'}\to\Rt$, and $\simp K$ is $2$-Matou\v sek by Proposition \ref{faithful_Mat}.

Finally, assume by contradiction that $\simp K$ is the nerve of a collection $\set{U_w}_{w\in V(\simp K)}$ which is a good cover in $\Rt$.
We draw $\simp G$ in $\Rt$, (that is we construct a map $\geo{\simp G}\to\Rt$), such that every vertex $x\in V(\simp G)$ is mapped to some $\overline x\in U_x$, and every edge $e=xy\in E(\simp G)$ is drawn as a piecewise linear curve $\overline{xy}\su U_x\cup U_e\cup U_y$.

Let $\simp H$ be the graph obtained from $\simp G$ by removing the edges $u_1v_1$ and $u_2v_2$ (so $\simp H$ is the graph drawn in Figure \arabic{figure_num} without dashed edges).
Note that the induced drawing of $\simp H$ must be planar, and $\simp H$ has the same faces in every planar drawing (so the faces are the same as in Figure \arabic{figure_num}). Specifically, $\overline{u_1v_1}$ must cross $\overline{ab}$, and $U_{ab}\setminus U_{u_1v_1}$ must have (at least) two connected components (one bounded in the interior of the curve $\overline{u_1}\overline{v_1}\overline{u_0}\overline{v_2}\overline{u_1}$, the other in the exterior). Finally, observe that $\overline{u_2v_2}$ must intersect $\overline{ab}$ at least twice (once in each connected component) as in Figure \arabic{figure_num} - thus $U_{ab}\cap U_{u_2v_2}$ is not path-connected, contradicting our assumption.
\end{exmp}


\section{Further questions}  

\begin{ques}
Are there $d$-Matou\v sek complexes that aren't $d$-Leray?
\end{ques}
We know of the complex in Example \ref{Mat_non_col} due to Tancer, which is $2$-Matou\v sek but not $2$-collapsible. But it is unknown to the author whether there exists a simplicial complex which is $d$-Matou\v sek but not $d$-Leray, even for $d=2$. By Remark \ref{induced_sub}, the question is equivalent to: does there exist a $d$-Matou\v sek complex with non-trivial $m$-homology for some $m\geq d$?

Furthermore, if such complexes exist, can we find a property similar to $d$-Matou\v sek, which also implies that a complex is $d$-Leray?

\begin{ques}
Under what conditions is a $d$-Matou\v sek complex also $d$-representable?
\end{ques}
For example, the Haefliger-Weber theorem (see \cite[Theorem 5.5]{Sk}) states that if $2d\geq 3\Par{\dim\simp L+1}$ and there exists an equivariant map $\scdp{\simp L}\to S^{d-1}$, then there also exists an embedding $\geo{\simp L}\to\Rd$.
This implies that if $\simp K$ satisfies $\hat{\simp K}=\scdp{\Par{\simp K'}}$ (such as those $\simp K$ given by Lemma \ref{dual_classifying}) and $2d\geq 3\Par{\dim\simp K'+1}$, then $\simp K$ is $d$-Matou\v sek iff $\simp K$ is the nerve of a good cover in $\Rd$.

In general, the configuration space $\hat{\simp K}$ can be ``smaller'' than $\scdp{\Par{\simp K'}}$. Trying to modify Haefliger-Weber for our needs, we may apply Skopenkov's proof of \cite[Proposition 8.5]{Sk} to $\hat{\simp K}$ and find that for general $\simp K$ and $2d\geq 3\Par{\dim\simp K'+1}$, the complex $\simp K$ is $d$-Matou\v sek if and only if there exists a faithful map $f:\geo{\simp K'}\to\Rd$. However, non-empty intersections of $\simp K'$-faces introduced by such $f$ obtained from the proof need not be contractible.



\begin{ques}
What new results can we obtain by using more general methods?
\end{ques}

While stronger topological methods exist (see \cite{Zi}), we choose to use the Borsuk--Ulam method as it doesn't require as much intensive topology (the Borsuk--Ulam theorem is the only topological result needed), succeeds in proving new results and outlining limitations of topological methods, but possibly not reaching their full potential. We therefore welcome improvements to the ``correct'' definition of a $d$-Matou\v sek complex. In vague terms, if a $d$-Leray complex is one which has no homological reason not to be $d$-representable, a $d$-Matou\v sek complex as defined in this paper is one for which a $d$-representation lifted to the configuration space of pairs doesn't contradict the Borsuk--Ulam theorem. The true spirit of the definition for a $d$-Matou\v sek complex should be a complex which doesn't exhibit similarly defined topological barriers to $d$-representability.

\comment{
As Wegner has shown in \cite{We}, every $d$-representable complex is $d$-collapsible, and additionally every $d$-collapsible complex is also $d$-Leray. While we have seen in the previous section that every $d$-representable complex is also $d$-Matou\v sek, and the authors believe that every $d$-Matou\v sek complex is also $d$-Leray, a proof has yet to be found. We therefore raise this as a conjecture:
\begin{conj} \label{conj1}
Every $d$-Matou\v sek complex is $d$-Leray.
\end{conj}
Another set of questions arise naturally from proposition \ref{prop1}. Under what conditions are the converse assertions true?\\
Note that the example in theorem \ref{thm3} demonstrates that there are complexes $K$ which aren't $d$-representable, although faithful maps $K'\to\Rd$ do exist.
\begin{ques} \label{que2}
Under what conditions does a semi-faithful $K'\to\Rd$ imply the existence of a faithful $K'\to\Rd$?
\end{ques}
It isn't difficult to find maps that are semi-faithful but not faithful (even linear maps). Although it seems unlikely to the author that a semi-faithful map always implies a faithful one, the answer to this question is still unknown.
\begin{ques} \label{que3}
When does being $d$-Matou\v sek imply the existence of a faithful or semi-faithful $K'\to\Rd$?
\end{ques}
We have seen that a faithful map $K'\to\Rd$ implies an equivariant $\hat K\to\topdp{(\Rd)}$. In general the converse isn't true, and there are simplicial complexes for which this fails. 
\\
Our $\Zt$-space $\hat K$ is not standard in the literature, but the question has been studied for deleted products. According to a theorem of Weber from 1967, this is guaranteed if the dimension of $\tilde K$ is small enough - namely, if
$$d\geq\frac{3}{2}(\dim(\tilde K) +1)$$
For an English proof of Weber's theorem, see chapter 7 of \cite{Ad}.\\
It is also possible that Weber's proof can be modified to work with $\hat K$, without assuming Conjecture \ref{conj1}.}

\bibliographystyle{plain}

\end{document}